\newcommand{\Fund}{\mathcal{F}}
\newcommand{\R}{\mathbb{R}}
\newcommand{\C}{\mathbb{C}}
\newcommand{\Z}{\mathbb{Z}}
\newcommand{\Q}{\mathbb{Q}}
\newcommand{\N}{\mathbb{N}}
\newcommand{\Half}{\mathcal{H}}
\newcommand{\eps}{\varepsilon}
\newcommand{\mat}[4]{\left(\begin{matrix}#1&#2\\#3&#4\end{matrix}\right)}
\newcommand{\vect}[2]{\left(\begin{matrix}#1\\#2\end{matrix}\right)}
\newcommand{\svec}[2]{\left(\begin{smallmatrix}#1\\#2\end{smallmatrix}\right)}
\newcommand{\abs}[1]{\left|#1\right|}
\newcommand{\norm}[1]{\left\lVert#1\right\rVert}
\newcommand{\from}{\colon}
\newcommand{\Zint}[1]{\left\llbracket #1\right\rrbracket}
\newcommand{\babs}[1]{\bigl|#1\bigr|}
\newcommand{\bbabs}[1]{\biggl|#1\biggr|}
\newcommand{\overbar}[1]{\mkern
  1.5mu\overline{\mkern-1.5mu#1\mkern-1.5mu}\mkern 1.5mu}
\newcommand{\conj}[1]{\overbar{#1}}
\DeclareMathOperator{\im}{Im}
\DeclareMathOperator{\re}{Re}
\DeclareMathOperator{\Sp}{Sp}
\DeclareMathOperator{\GSp}{GSp}
\DeclareMathOperator{\Diag}{Diag}
\DeclareMathOperator{\Tr}{Tr}
\DeclareMathOperator{\SL}{SL}
\DeclareMathOperator{\GL}{GL}
\newtheorem{prop}{Proposition}[section]
\Crefname{prop}{Proposition}{Propositions}
\newtheorem{thm}[prop]{Theorem}
\newtheorem{lem}[prop]{Lemma}
\Crefname{lem}{Lemma}{Lemmas}
\newtheorem{cor}[prop]{Corollary}
\Crefname{item}{Item}{Items}
\theoremstyle{definition}
\newtheorem{algo}[prop]{Algorithm}
\newcommand{\addpic}[1]{\begin{center}\includegraphics[scale=0.5]{#1}\vspace{5mm}\end{center}}
\title{Sign choices in the AGM for genus two theta constants}
\author{Jean Kieffer}
\begin{document}

\selectlanguage{english}

\maketitle

\begin{abstract}
  Existing algorithms to compute genus~$2$ theta constants in
  quasi-linear time use Borchardt sequences, an analogue of the
  arithmetic-geometric mean for four complex numbers. In this paper,
  we show that these Borchardt sequences are only given by good
  choices of square roots, as in the genus~$1$ case. This removes the
  sign indeterminacies when computing genus~$2$ theta constants
  without relying on numerical integration.
\end{abstract}

\selectlanguage{french}

\begin{abstract}
  Les algorithmes existants pour le calcul de thêta-constantes en
  genre~$2$ en temps quasilinéaire utilisent des suites de Borchardt,
  un analogue de la moyenne arithmético-géométrique pour quatre
  nombres complexes. Dans cet article, nous montrons que ces suites de
  Borchardt sont constituées uniquement de bons choix de signes, comme
  c'est le cas en genre~$1$. Ce résultat permet de lever les
  indéterminations de signes lors du calcul de thêta-constantes en
  genre~$2$ sans recours à l'intégration numérique.
\end{abstract}

\selectlanguage{english}

\bigskip
\textbf{Keywords:} Theta functions, Genus~$2$, Algorithms, Borchardt mean

\textbf{Subject classification (2010):}
11Y35; 
11Y16; 
11F41; 
11F27 

\section{Introduction}

Denote by~$\Half_g$ the Siegel half space of principally polarized
abelian varieties of dimension~$g$, consisting of all matrices
$\tau\in M_g(\C)$ such that~$\tau$ is symmetric and $\im(\tau)$ is
positive definite; for instance, $\Half_1$ is the usual upper half
plane. The \emph{theta constants} are the holomorphic functions
on~$\Half_g$ defined by
\begin{equation}
  \label{eq:theta}
  \theta_{a,b}(\tau) = \sum_{m\in\Z^g} \exp\left(i\pi \left(\left(m+\frac a2\right)^t
      \tau\left(m+\frac a2\right) + \left(m+\frac a2\right)^t b\right)\right),
\end{equation}
where~$a$ and~$b$ run through $\{0,1\}^g$ (by convention, vectors in
formula~\eqref{eq:theta} are written vertically). Theta constants have
a fundamental importance in the theory of Siegel modular forms, as
every scalar-valued Siegel modular function of any weight on~$\Half_g$
has an expression in terms of quotients of theta
constants~\cite[Thm.~9 p.~222]{igusa_ThetaFunctions1972}.  Moreover,
for~$1\leq g\leq 3$, then the stronger result that every Siegel
modular form is a polynomial in the theta constants
holds~\cite{igusa_GradedRingThetaconstants1964,
  igusa_GradedRingThetaconstants1966,
  freitag_VarietyAssociatedRing2019}.

In numerical algorithms manipulating modular forms, the following
operations are therefore very common: first, given (quotients of)
theta constants at a given $\tau\in\Half_g$, compute~$\tau$\,; second,
given $\tau\in\Half_g$, compute the theta
constants~$\theta_{a,b}(\tau)$. For instance, these operations are
important building blocks in algorithms computing modular
polynomials~\cite{enge_ComputingModularPolynomials2009,
  milio_QuasilinearTimeAlgorithm2015,
  milio_ModularPolynomialsHilbert2020} or Hilbert class
polynomials~\cite{enge_ComplexityClassPolynomial2009,
  enge_ComputingClassPolynomials2014, streng_ComputingIgusaClass2014}
via complex approximations.

The arithmetic-geometric mean
(AGM)~\cite{borchardt_TheorieArithmetischgeometrischesMittels1888,
  cox_ArithmeticgeometricMeanGauss1984,
  bost_MoyenneArithmeticogeometriquePeriodes1988,
  jarvis_HigherGenusArithmeticgeometric2008} gives an algorithm to
find~$\tau$ given its theta constants. This algorithm is quasi-linear
in terms of the required precision. In order to compute theta
constants in quasi-linear time as well, a well-studied strategy is to
combine the AGM with Newton iterations. This strategy was first
described in~\cite{dupont_FastEvaluationModular2011} in the genus~$1$
case, in~\cite{dupont_MoyenneArithmeticogeometriqueSuites2006} in the
genus~$2$ case, and later extended to theta \emph{functions}, in
opposition to theta \emph{constants},
in~\cite{labrande_ComputingJacobiTheta2018,labrande_ComputingThetaFunctions2016}. These
references also outline extensions to higher genus.

\paragraph{The genus~1 case.}
Let us detail the genus~$1$ case to convey the general idea. After
reducing the argument~$\tau\in \Half_1$ using Gauss's
algorithm~\cite[§6.1]{streng_ComputingIgusaClass2014}, we can assume
that~$\tau$ belongs to the classical fundamental domain under the
action of~$\SL_2(\Z)$, denoted by~$\Fund_1$.

First assume that theta quotients at~$\tau\in\Fund_1$ are given. Then
the sequence
\begin{displaymath}
  B(\tau) = \left( \frac{\theta_{0,0}^2(2^n\tau)}{\theta_{0,0}^2(\tau)}, \,
    \frac{\theta_{0,1}^2(2^n\tau)}{\theta_{0,0}^2(\tau)} \right)_{n\geq 0}
\end{displaymath}
is an \emph{AGM sequence}, meaning that each term is obtained from
the previous one by means of the transformation
\begin{displaymath}
  (x,y) \mapsto \left(\frac{x+y}{2},\sqrt{x}\sqrt{y}\right)
\end{displaymath}
for some choice of the square roots. This is a consequence of the
duplication formula \cite[p.~221]{mumford_TataLecturesTheta1983}, the
correct square roots being the theta quotients themselves. In the
algorithm, the sign ambiguity is easily removed using the fact
that~$\sqrt{x}$ and~$\sqrt{y}$ should lie in a common open quarter
plane \cite[Thm.~2]{dupont_FastEvaluationModular2011}: we say that the
sequence~$B(\tau)$ is given by \emph{good sign choices}. It converges
quadratically to~$1/\theta_{0,0}^2(\tau)$, as the series expansion
\eqref{eq:theta} shows.

It turns out that the sequence~$B(-1/\tau)$ is also an AGM sequence
with good sign choices
\cite[Prop.~7]{dupont_FastEvaluationModular2011}. Its first term can
be computed from theta quotients at~$\tau$ using the transformation
formulas for theta constants under~$\SL_2(\Z)$. The limit
of~$B(-1/\tau)$ is $1/\theta_{0,0}^2(-1/\tau)$. Finally, we can
recover~$\tau$ using the formula
\begin{equation}
  \label{eq:tau-g1}
  \theta_{0,0}^2\left(\frac{-1}{\tau}\right) = -i\tau \theta_{0,0}^2(\tau).
\end{equation}
Since AGM sequences with good sign choices converge quadratically,
this gives an algorithm to \emph{invert} theta functions on~$\Fund_1$
with quasi-linear complexity in the output precision, at least for
fixed~$\tau$. This method was already known to Gauss \cite[X.1,
pp.\,184--206]{gauss_Werke1868}, and we recommend
\cite[§3C]{cox_ArithmeticgeometricMeanGauss1984} for a historical
exposition of Gauss's works on the AGM and elliptic functions.

In order to compute theta functions at a given~$\tau\in\Fund_1$, the
most efficient known method is to build a Newton
scheme~\cite{dupont_FastEvaluationModular2011}, using the AGM method
to invert theta constants. This yields a quasi-linear algorithm to
compute genus~$1$ theta constants, whose complexity can be made
uniform in~$\tau\in\Fund_1$
\cite[Thm.~5]{dupont_FastEvaluationModular2011}.

\paragraph{The genus~2 case.}
A similar strategy can be applied to theta functions in genus~$2$,
using \emph{Borchardt sequences}, a generalization of AGM sequences
for four complex
numbers~\cite{borchardt_TheorieArithmetischgeometrischesMittels1888,
  bost_MoyenneArithmeticogeometriquePeriodes1988,
  jarvis_HigherGenusArithmeticgeometric2008}. Let us refer
to~§\ref{sec:borchardt} for the definition of Borchardt sequences, the
numbering of genus~$2$ theta constants, and the definition of the
matrices $\gamma_k\in\Sp_4(\Z)$ for $0\leq k\leq 3$. The Borchardt
sequences we consider are the sequences $B(\gamma_k\tau)$ for
$0\leq k\leq 3$, where
\begin{displaymath}
  B(\tau) = \left(\frac{\theta_0^2(2^n\tau)}{\theta_0^2(\tau)},\,
    \frac{\theta_1^2(2^n\tau)}{\theta_0^2(\tau)},\,
    \frac{\theta_2^2(2^n\tau)}{\theta_0^2(\tau)},\,
    \frac{\theta_3^2(2^n\tau)}{\theta_0^2(\tau)} \right)_{n\geq 0}
\end{displaymath}
for every $\tau\in\Half_2$. Their first terms are given by different
combinations of theta quotients at~$\tau$ (see
\cref{cor:theta-transf}). It is known that for a given~$\tau$, all but
a finite number of sign choices in these Borchardt sequences are good,
and the other sign choices can be determined using certified
computations of hyperelliptic integrals at relatively low precision:
see the discussion before Prop.~3.3 in
\cite{labrande_ComputingThetaFunctions2016}, and
\cite{molin_ComputingPeriodMatrices2019} for an algorithm that
provides this input. However, the required precision and the cost of
the numerical integration algorithms depend heavily on~$\tau$.

Actually, when~$\tau$ belongs to the usual fundamental
domain~$\Fund_2$ under the action of~$\Sp_4(\Z)$, practical
experiments suggest that \emph{all} sign choices are good in the
genus~$2$ algorithm as well
\cite[Conj.~9.1]{dupont_MoyenneArithmeticogeometriqueSuites2006},
\cite[Conj.~9]{enge_ComputingClassPolynomials2014}. The goal of this
paper is to prove this fact. More precisely, we define
in~§\ref{sec:borchardt} a subset $\Fund'\subset\Half_2$
containing~$\Fund_2$, and prove the following result.

\begin{thm}
  \label{thm:main}
  For every $\tau\in\Fund'$, every $0\leq k\leq 3$ and every
  $n\geq 0$, the theta constants
  \begin{displaymath}
    \theta_j(2^n\gamma_k\tau) \quad \text{for } 0\leq j\leq 3
  \end{displaymath}
  are contained in a common open quarter plane.
\end{thm}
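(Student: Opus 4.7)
The plan is to control the arguments of the theta constants via their series expansion~\eqref{eq:theta}, exploiting that $\im(\tau)$ is bounded below on~$\Fund'$ and doubles under the duplication $\tau\mapsto 2\tau$. The key observation is that for large $\im(\tau)$, the series~\eqref{eq:theta} is dominated by its $m=0$ term. Since the four theta constants $\theta_j$ relevant to Borchardt sequences all have characteristic of the form $(0,b)$, this leading term equals~$1$ for each of them; the remaining tail is bounded by a Gaussian series in the entries of $\im(\tau)$. Hence all four constants lie in a small neighborhood of~$1$, and a fortiori in a common open quarter plane.

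To make this quantitative, I would first establish an explicit $Y_0>0$ such that whenever the smallest eigenvalue of $\im(\tau)$ exceeds~$Y_0$, one has $\re(\theta_j(\tau))>0$ for each $j$, and hence $\theta_0(\tau),\ldots,\theta_3(\tau)$ all lie in the common quarter plane $\re(z)>0$. Applied to $\tau'=2^n\gamma_k\tau$, this handles all $n$ beyond some threshold $n_0$ that depends only on~$k$ and on an explicit lower bound for the smallest eigenvalue of $\im(\gamma_k\tau)$ as $\tau$ ranges over~$\Fund'$. For the finitely many small values $n<n_0$, I would proceed case by case on~$k$, keeping more leading terms in the $q$-expansion and translating the inequalities defining $\Fund'$ (which should control $\re(\tau_{ij})$ and the action of each $\gamma_k$) into concrete bounds on the relative phases of those leading terms.

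The main obstacle will be exactly these small-$n$ cases when $\im(\gamma_k\tau)$ is smallest over~$\Fund'$: the $q$-expansion converges slowly, each $\theta_j$ involves a competition between several non-negligible terms, and the argument arithmetic becomes delicate. I expect~$\Fund'$ to be defined as precisely the largest subset of $\Half_2$ on which these phase bounds still close up simultaneously for all four $k$, and a key technical step will be to identify, for each~$k$, a single quarter plane (not necessarily $\re(z)>0$) that contains the four values $\theta_j(\gamma_k\tau)$ uniformly for $\tau\in\Fund'$, then verify that doubling preserves containment in (possibly rotating) quarter planes until the asymptotic regime is reached.
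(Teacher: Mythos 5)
Your large-$n$ regime is essentially the paper's: a lower bound on the smallest eigenvalue of $\im(2^n\gamma_k\tau)$ (Lemmas \ref{lem:goodpos} and \ref{lem:lambda1-gmai}) forces all four theta constants into a small disk around $1$. That part is sound. The genuine gap is in your small-$n$ plan. You propose to expand $\theta_j(2^n\gamma_k\tau)$ directly via \eqref{eq:theta}, "keeping more leading terms." But for $k\geq 1$ and small $n$ the point $2^n\gamma_k\tau$ is \emph{not} near the cusp: e.g.\ $\im(\gamma_1\tau)$ has smallest eigenvalue of order $y_1(\tau)/\abs{z_1(\tau)}^2 \approx 1/y_1(\tau)$, which tends to $0$ as $\tau$ moves up the (unbounded) domain $\Fund'$. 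The nome there tends to $1$, the number of non-negligible terms in \eqref{eq:theta} is unbounded, and no fixed truncation gives uniform phase control. The missing idea is the second symplectic reduction of §\ref{sec:cusp}: auxiliary matrices $\eta_k^{(n)}\in\Sp_4(\Z)$ sending $2^n\gamma_k\tau$ to points $\tau_k^{(n)}$ (e.g.\ $\tau_1^{(n)}=\mat{2^{-n}z_1}{z_3}{z_3}{2^nz_2}$) that \emph{are} near the cusp precisely in the range of $n$ not yet covered by the asymptotic regime, so that the two regimes overlap.

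This reduction also invalidates your structural simplification that "the four relevant constants all have characteristic $(0,b)$ with leading term $1$." After applying the transformation formula (\cref{prop:transf}) for $\eta_k^{(n)}$, the tuple $(\theta_j(2^n\gamma_k\tau))_{0\leq j\leq 3}$ becomes, projectively, a tuple involving $\theta_4,\theta_6,\theta_8,\theta_9,\theta_{12}$ at $\tau_k^{(n)}$ (\cref{cor:theta-transf}), whose leading terms are nontrivial phases such as $2\exp(i\pi z_1/4)$; bounding the resulting angles is where the conditions $\abs{x_j}\leq\frac12$, $2\abs{y_3}\leq y_1\leq y_2$ of $\Fund'$ actually do the work, and it occupies most of §§\ref{sec:bounds}--\ref{sec:proof}. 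Finally, your closing suggestion to "verify that doubling preserves containment in (possibly rotating) quarter planes" has no counterpart in a workable argument: good position is not known to propagate under duplication, and the paper treats each $n$ independently rather than by induction.
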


Dupont
\cite[Prop.~9.1]{dupont_MoyenneArithmeticogeometriqueSuites2006}
proved this result in the particular case of~$\gamma_0=I_4$.

As a consequence, we can invert genus 2 theta constants in
quasi-linear time by using only Borchardt sequences with good sign
choices.  On the practical side, this result reduces the effort needed
to invert genus~$2$ theta constants with controlled precision losses;
see for
instance~\cite[§7.4.2]{dupont_MoyenneArithmeticogeometriqueSuites2006}
for an analysis of precision losses when computing limits of Borchardt
sequences. On the theoretical side, we hope that our result can be a
first step towards removing other heuristic assumptions when computing
genus~$2$ theta constants (in particular, the
assumption~\cite[§10.2]{dupont_MoyenneArithmeticogeometriqueSuites2006}
that the function used in the Newton scheme is analytic with
invertible Jacobian), and obtaining algorithms with uniform complexity
in~$\tau\in \Fund_2$.

This document is organized as follows. In \cref{sec:borchardt}, we
introduce our notational conventions. In \cref{sec:cusp}, we use the
action of the symplectic group to bring the matrices
$2^n\gamma_k\tau\in\Half_2$ closer to the cusp at infinity: this is
critical to obtain accurate information from the series
expansion~\eqref{eq:theta}. We give estimates on genus~$2$ theta
constants in \cref{sec:bounds}, and we finish the proof of the main
theorem in \cref{sec:proof}.

\paragraph{Acknowledgement.} The author would like to thank Aurel Page
and the anonymous referees for their careful reading and helpful
suggestions to improve the exposition.

\section{Theta constants and Borchardt sequences}
\label{sec:borchardt}

We define a \emph{Borchardt sequence} to be a sequence of complex
numbers
\begin{displaymath}
  {(s_b^{(n)})}_{b\in (\Z/2\Z)^2,\, n\geq 0}
\end{displaymath}
with the following property: for every $n\geq 0$, there
exist~$t_b^{(n)}$ for $b\in(\Z/2\Z)^2$ such that~$t_b^{(n)}$ is a
square root of~$s_b^{(n)}$, and
\begin{displaymath}
  s_b^{(n+1)} = \frac14 \sum_{b_1+b_2=b} t_{b_1}^{(n)} t_{b_2}^{(n)}
  \qquad\text{for each } b\in(\Z/2\Z)^2.
\end{displaymath}
The duplication formula \cite[p.\,221]{mumford_TataLecturesTheta1983}
states that for every $\tau\in\Half_2$, the sequence
\begin{displaymath}
  B(\tau) = \bigl(\theta_{0,b}^2(2^n\tau)\bigr)_{b\in\{0,1\}^2, n\geq 0}
\end{displaymath}
is a Borchardt sequence; the choice of square roots at each step is
given by the theta constants~$\theta_{0,b}(2^n\tau)$
themselves. By the series expansion~\eqref{eq:theta}, we have
\begin{displaymath}
  \theta_{0,b}(2^n\tau) =
  \sum_{m\in \Z^2} \exp\bigl(- 2^n \pi m^t \im(\tau) m\bigr)
  \exp\left(i\pi\left(2^n m^t \re(\tau) m + m^t b\right)\right). 
\end{displaymath}
When~$n$ tends to infinity, all the terms except~$m=0$ converge
rapidly to zero, because~$\im(\tau)$ is positive definite. Therefore
the Borchardt sequence~$B(\tau)$ converges to~$(1,1,1,1)$.

We say that a set of complex numbers is \emph{in good position} when
it is included in an open quarter plane seen from the origin, i.e.~a set of the form
\begin{displaymath}
  \left\{r \exp({i(\alpha_0+\alpha)}) \ | \ r>0 \text{ and } 0<\alpha<\pi/2 \right\}
\end{displaymath}
for some~$\alpha_0\in \R$. The property of being in good position is
invariant by nonzero complex scaling. A Borchardt sequence is given by
\emph{good sign choices} if for every~$n\geq 0$, the complex
numbers~$t_b^{(n)}$ for $b\in(\Z/2\Z)^2$ are in good position.

Let us now detail the algorithm to recover~$\tau\in\Half_2$ from its
theta quotients. We first introduce the
matrices~$\gamma_k\in\Sp_4(\Z)$ alluded to in the introduction. Let
\begin{displaymath}
  S_1 = \mat{1}{0}{0}{0},\ S_2 = \mat{0}{0}{0}{1},\ S_3 = \mat{0}{1}{1}{0},
\end{displaymath}
and define the matrix~$\gamma_k\in\Sp_4(\Z)$ for $0\leq k\leq 3$ by
\begin{displaymath}
  \gamma_0 = I_4, \text{ and } \gamma_k 
  = \mat{-I_2}{-S_k}{S_k}{-I+S_k^2} \text{ for } 1\leq k\leq 3.
\end{displaymath}
For convenience, we also introduce a numbering of theta constants
\cite[§6.2]{dupont_MoyenneArithmeticogeometriqueSuites2006}:
\begin{displaymath}
  \theta_{(a_0,a_1),(b_0,b_1)} =: \theta_j \quad \text{where } j = b_0+2b_1+4a_0+8a_1\in\Zint{0,15}.
\end{displaymath}
Assuming that the choices of square roots in the
sequences~$B(\gamma_k\tau)$ can be determined, we can
compute~$\tau\in\Fund_2$ from its theta quotients as follows.

\begin{algo}[{\cite[§9.2.3]{dupont_MoyenneArithmeticogeometriqueSuites2006}}]~

  \noindent
  \textbf{Input:} The projective vector of squares of theta
  constants~$\theta_j^2(\tau)$ for~$j\in \Zint{0,15}$, for
  some~$\tau\in \Half_2$.\\
  \textbf{Output:} The matrix~$\tau$.
  \begin{enumerate}
  \item For each $0\leq k\leq 3$, compute the first term
    of the sequence~$B(\gamma_k\tau)/\theta_0^2(\gamma_k\tau)$ using the
    transformation formulas for theta constants under~$\Sp_4(\Z)$ (see
    Igusa~\cite[Thm.~2 p.\,175 and
    Cor.~p.\,176]{igusa_ThetaFunctions1972}, or
    \cref{cor:theta-transf});
  \item For each~$0\leq k\leq 3$, compute $1/\theta_0^2(\gamma_k\tau)$
    as the limit of the Borchardt sequence
    $B(\gamma_k\tau)/\theta_0^2(\gamma_k\tau)$;
  \item Use the input and the newly computed
    $\theta_0^2(\gamma_0\tau) = \theta_0^2(\tau)$ to compute all squares
    of theta constants at~$\tau$;
  \item Recover $\tau = \mat{z_1}{z_3}{z_3}{z_2}$ using the relations
    given
    in~\cite[§6.3.1]{dupont_MoyenneArithmeticogeometriqueSuites2006}:
    \begin{displaymath}
      \theta_0^2(\gamma_1\tau) = -i z_1 \theta_4^2(\tau),
      \quad \theta_0^2(\gamma_2\tau) = -i z_2 \theta_8^2(\tau),
      \quad \theta_0^2(\gamma_3\tau) = - \det(\tau) \theta_0^2(\tau).
    \end{displaymath}
  \end{enumerate}
\end{algo}
  
In the sequel, we use the following notational
conventions. For~$\tau\in\Half_2$, we write
\begin{displaymath}
  \tau = \mat{z_1(\tau)}{z_3(\tau)}{z_3(\tau)}{z_2(\tau)}
  \quad \text{and}\quad 
  \begin{cases}
    x_j(\tau) = \re z_j(\tau)\\  y_j(\tau) = \im z_j(\tau)
  \end{cases}
  \quad\text{ for } 1\leq j\leq 3.
\end{displaymath}
For $1\leq j\leq 3$, we also write
\begin{displaymath}
  q_j(\tau) = \exp(-\pi y_j(\tau)).
\end{displaymath}
We denote by~$\lambda_1(\tau)$ the smallest eigenvalue of~$\im(\tau)$, and define
\begin{displaymath}
  r(\tau) = \min\Bigl\{\lambda_1(\tau), \frac{y_1(\tau)}{2}, \frac{y_2(\tau)}{2}\Bigr\}.
\end{displaymath}
We often omit the argument~$\tau$ to ease notation. We define~$\Fund'$
to be the set of all~$\tau\in\Half_2$ such that the following
conditions are satisfied:
\begin{equation}
  \label{eq:F2}
  \begin{aligned}
    \abs{x_j(\tau)}&\leq \frac12 \quad \text{for each } 1\leq j\leq 3,\\
    2\abs{y_3(\tau)}&\leq y_1(\tau)\leq y_2(\tau),\\
    y_1(\tau)&\geq \frac{\sqrt{3}}{2},\\
    \abs{z_j(\tau)}&\geq 1 \quad \text{for } j\in\{1,2\}.
  \end{aligned}
\end{equation}
The domain~$\Fund'$ contains the classical fundamental domain~$\Fund_2$
for the action of~$\Sp_4(\Z)$ on~$\Half_2$ \cite[Prop.~3
p.~33]{klingen_IntroductoryLecturesSiegel1990}. Assumptions similar
to~\eqref{eq:F2} are usual when giving analytic estimates on theta
constants: for instance, the domain~$\mathcal{B}$
in~\cite{streng_ComputingIgusaClass2014} is defined by the first three
inequalities of~\eqref{eq:F2}.

Finally, for each~$\tau\in \Half_2$, we write
\begin{equation}
  \label{eq:xi}
  \begin{aligned}
    \xi_{4,6}(\tau) &= 2\exp \Bigl(i \pi \frac{z_1(\tau)}{4} \Bigr),\\
    \xi_{8,9}(\tau) &= 2\exp \Bigl(i \pi \frac{z_2(\tau)}{4} \Bigr),\\
    \xi_0(\tau) &= 1 + 2\exp(i\pi z_1(\tau)) + 2\exp(i\pi z_2(\tau)), \\
    \xi_{0,2}(\tau) &= 1 + 2\exp(i\pi z_1(\tau)),\\
    \xi_{0,1}(\tau) &= 1 + 2\exp(i\pi z_2(\tau)), \quad\text{and}\\
    \xi_{12}(\tau) &= \exp\left(i\pi\frac{z_1(\tau)+z_2(\tau)}{4}\right)
    \left(
      \exp\left(i\pi \frac{z_3(\tau)}{2}\right)
      +\exp\left(-i\pi \frac{z_3(\tau)}{2}\right)\right).
  \end{aligned}
\end{equation}
These complex numbers correspond to the first term(s) of the series
defining theta constants at~$\tau$. For instance,~$\xi_{4,6}(\tau)$
approximates both~$\theta_4(\tau)$ and~$\theta_6(\tau)$. We will
recall the definitions~\eqref{eq:xi} before using them in the
computations of~§\ref{sec:bounds}.

\section{Other expressions for theta constants at \texorpdfstring{$2^n\gamma_k\tau$}{}}
\label{sec:cusp}

For every $n\geq 0$, we define
\begin{align*}
  \eta_1^{(n)} &=
  \left(
    \begin{matrix}
      0&0&-1&0\\0&1&0&0\\1&0&2^n&0\\0&0&0&1
    \end{matrix}
  \right),\quad\ \
  \eta_2^{(n)} =
  \left(
    \begin{matrix}
      1&0&0&0\\0&0&0&-1\\0&0&1&0\\0&1&0&2^n
    \end{matrix}
  \right),\\
  \eta_3^{(n)} &=
  \left(
    \begin{matrix}
      0&0&0&-1\\0&0&-1&0\\0&1&2^n&0\\1&0&0&2^n
    \end{matrix}
  \right),\quad \text{and}\quad
  \eta_4^{(n)} =
  \left(
    \begin{matrix}
      0&0&1&0\\0&1&0&0\\-1&0&0&0\\0&0&0&1
    \end{matrix}
  \right) \eta_3^{(n)}.
\end{align*}

\begin{lem} Let $n\geq 0$.
  \begin{enumerate}
  \item For every $1\leq k\leq 4$, the matrix~$\eta_k^{(n)}$ belongs
    to~$\Sp_4(\Z)$.
  \item For every $\tau = \mat{z_1}{z_3}{z_3}{z_2}\in\Half_2$, we have
    \begin{equation}
      \label{eq:taujn}
    \begin{aligned}
      \tau_1^{(n)} :=  \eta_1^{(n)}(2^n\gamma_1\tau) &=  \mat{2^{-n}z_1}{z_3}{z_3}{2^nz_2},\\
      \tau_2^{(n)} :=  \eta_2^{(n)}(2^n\gamma_2\tau) &=  \mat{2^nz_1}{z_3}{z_3}{2^{-n}z_2},\\
      \tau_3^{(n)} :=  \eta_3^{(n)}(2^n\gamma_3\tau) &=  2^{-n}\tau, \quad\text{and}\\
      \tau_4^{(n)} := \eta_4^{(n)}(2^n\gamma_3\tau) &=
      \mat{-2^n/z_1}{-z_3/z_1}{-z_3/z_1}{2^{-n}(z_2-z_3^2/z_1)}.
    \end{aligned}
    \end{equation}
  \end{enumerate}
\end{lem}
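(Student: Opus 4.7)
The plan is to verify both parts by direct block-matrix computations, organizing the second part so that everything reduces to a single matrix product rather than four separate calculations of $\gamma_k\tau$ followed by $\eta_k^{(n)}$.

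For part (i), I would write each $\eta_k^{(n)}=\bigl(\begin{smallmatrix}A&B\\C&D\end{smallmatrix}\bigr)$ in $2\times 2$ blocks and check the symplectic relations $A^tD-C^tB=I_2$, $A^tC=C^tA$, $B^tD=D^tB$. For $\eta_1^{(n)}$, $\eta_2^{(n)}$, $\eta_3^{(n)}$ the blocks are very sparse (made of $S_1$, $S_2$, $S_3$, multiples of $I_2$, and zero), so each identity reduces to one line using $S_1^2=S_1$, $S_2^2=S_2$, $S_3^2=I_2$. For $\eta_4^{(n)}$, I would factor $\eta_4^{(n)}=\rho\,\eta_3^{(n)}$ with $\rho$ the matrix written to its left and verify the symplectic relations for $\rho$ directly; then $\eta_4^{(n)}\in\Sp_4(\Z)$ follows from closure.

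For part (ii), the clean way to avoid computing $\gamma_k\tau$ and then pushing through $2^n\cdot$ and then $\eta_k^{(n)}$ is to notice that the scaling $\tau\mapsto 2^n\tau$ is the $\GSp_4$-action of $\Lambda_n=\bigl(\begin{smallmatrix}2^nI_2&0\\0&I_2\end{smallmatrix}\bigr)$ (similitude factor $2^n$), so $\tau_k^{(n)}$ is the action of the single block matrix $M_k^{(n)}:=\eta_k^{(n)}\Lambda_n\gamma_k$ on $\tau$. I would compute $M_k^{(n)}$ in blocks for $k=1,2,3$. Using $S_k^2=S_k$ for $k=1,2$ and $S_3^2=I_2$, all off-diagonal blocks of $M_k^{(n)}$ collapse to zero, leaving
\begin{displaymath}
M_k^{(n)}=\begin{pmatrix}A_k'&0\\0&D_k'\end{pmatrix}
\end{displaymath}
with $A_k'$, $D_k'$ diagonal; the $\GSp_4$-action $\tau\mapsto A_k'\tau\,D_k'^{-1}$ then yields the stated formulas for $\tau_1^{(n)},\tau_2^{(n)},\tau_3^{(n)}$ by inspection. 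The case $k=3$ is particularly clean: $M_3^{(n)}=\bigl(\begin{smallmatrix}-I_2&0\\0&-2^nI_2\end{smallmatrix}\bigr)$, acting as scaling by $2^{-n}$.

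For $k=4$, I would exploit $\eta_4^{(n)}=\rho\,\eta_3^{(n)}$ to get $\tau_4^{(n)}=\rho\cdot\tau_3^{(n)}=\rho\cdot(2^{-n}\tau)$, and then apply the formula $\sigma\mapsto(A\sigma+B)(C\sigma+D)^{-1}$ with the blocks of $\rho$ and $\sigma=2^{-n}\tau$. The inverse $(C\sigma+D)^{-1}$ is a $2\times 2$ inverse with denominator $-2^{-n}z_1$, which accounts for the appearance of $z_1^{-1}$ in every entry of $\tau_4^{(n)}$. The main obstacle is purely bookkeeping: keeping the signs and the powers of $2^n$ consistent through the block multiplications. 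There is no conceptual difficulty—once part (i) is done, part (ii) is a sequence of $2\times 2$ computations, and the symplectic/GSp formalism ensures that intermediate inverses are well-defined since $z_1\in\Half_1$ has nonzero imaginary part.
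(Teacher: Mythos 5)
Your proposal is correct and follows essentially the same route as the paper: the paper also realizes $\tau\mapsto 2^n\gamma_k\tau$ as the action of the single matrix $\mat{-2^nI_2}{-2^nS_k}{S_k}{-I+S_k^2}\in\GSp_4(\Q)$ (your $\Lambda_n\gamma_k$) and observes that left-multiplying by $\eta_k^{(n)}$ yields a diagonal matrix for $k=1,2,3$, with $\tau_4^{(n)}$ handled implicitly through the factorization $\eta_4^{(n)}=\rho\,\eta_3^{(n)}$ exactly as you propose. Your block verification of the symplectic relations for part (i) is just an explicit version of the paper's one-line remark that the rows form a symplectic basis of $\Z^4$.
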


\begin{proof}
  \begin{enumerate}
  \item The lines of each~$\eta_k^{(n)}$ define a
    symplectic basis of~$\Z^4$.
  \item The action of~$\Sp_4(\Z)$ on~$\Half_2$ extends to an action of
    the larger group
    \begin{displaymath}
      \GSp_4(\Q) = \left\{\gamma\in \GL_4(\Q) \ |\  \exists \mu\in \Q^\times,
        \gamma^t \mat{0}{I_2}{-I_2}{0} \gamma = \mu \mat{0}{I_2}{-I_2}{0}
      \right\}.
    \end{displaymath}
    The matrix~$2^n\gamma_k\tau$ is the image of~$\tau$ under
    \begin{displaymath}
      \mat{-2^n I_2}{-2^n S_k}{S_k}{-I + S_i^2} \in \GSp_4(\Q).
    \end{displaymath}
    When we multiply this matrix by~$\eta_k^{(n)}$ on the left, we obtain
    \begin{align*}
      \Diag(-1,-2^n,-2^n,-1) &\qquad\text{for } k=1,\\
      \Diag(-2^n,-1,-1,-2^n) &\qquad\text{for } k=2, \text{ and}\\
      \Diag(-1,-1,-2^n,-2^n) &\qquad\text{for } k=3. \qedhere
    \end{align*}
  \end{enumerate}
\end{proof}

We recall the transformation formulas for theta constants in
genus~$2$. For a square matrix~$m$, we denote by~$m_0$ the column
vector containing the diagonal of~$m$.

\begin{prop}[{\cite[Thm.~2 p.\,175 and
    Cor.~p.\,176]{igusa_ThetaFunctions1972}}]
  \label{prop:transf}
  Let $a, b\in \{0,1\}^2$, and let
  \begin{displaymath}
    \gamma = \mat{A}{B}{C}{D}\in\Sp_4(\Z).
  \end{displaymath}
  Define
  \begin{displaymath}
    \vect{\alpha}{\beta} = \gamma^t \vect{a - (C D^t)_0}{b - (A B^t)_0}.
  \end{displaymath}
  Then, for every $\tau\in\Half_2$, we have
  \begin{displaymath}
    \theta_{a,b}(\gamma\tau) = \kappa(\gamma)\, \zeta_8^{\eps(\gamma,a,b)}
    \det(C\tau+D)^{1/2}\,\theta_{a',b'}(\tau)
  \end{displaymath}
  where 
  \begin{align*}
    \zeta_8&=e^{i\pi/4},\qquad \vect{a'}{b'} = \vect{\alpha}{\beta} \mod 2,\\
    \eps(\gamma,a,b) &= 2 (B\alpha)^t (C\beta) - (D\alpha)^t(B\alpha)
                       - (C\beta)^t(A\beta) + 2 ((A B^t)_0)^t (D\alpha-C\beta),
  \end{align*}
  and~$\kappa(\gamma)$ is an eighth root of unity depending only
  on~$\gamma$, with a sign ambiguity coming from the choice of a
  holomorphic square root of~$\det(C\tau+D)$.
\end{prop}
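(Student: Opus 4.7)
The plan is to reduce the formula to a verification on a generating set of $\Sp_4(\Z)$, and then to propagate it using the cocycle property of both sides of the claimed identity. Convenient generators are the translations $T_B = \mat{I_2}{B}{0}{I_2}$ with $B = B^t \in M_2(\Z)$, the block-diagonal matrices $M_A = \mat{A}{0}{0}{(A^t)^{-1}}$ with $A \in M_2(\Z)$ invertible over $\Z$, and the involution $J = \mat{0}{-I_2}{I_2}{0}$. For any $\gamma\in\Sp_4(\Z)$, expressing $\gamma$ as a word in these generators will reduce the theorem to a finite case-by-case verification together with a cocycle check.

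For $T_B$, substituting into~\eqref{eq:theta} and expanding $(m+\tfrac{a}{2})^t B (m+\tfrac{a}{2})$ gives the formula directly, with $\eps$ extracted from the quadratic terms in $B$ and in the characteristics, $\kappa(T_B) = 1$, and a shift of the form $(a',b') = (a, b + Ba + (B^t)_0)$ modulo~$2$. For $M_A$, the substitution $m \mapsto A^t m$ in the series yields $(a',b') = (A^{-1}a, A^t b)$ with an explicit sign depending on $\det A$. The case of~$J$, which carries all the analytic content, is handled by Poisson summation applied to~\eqref{eq:theta}; the Gaussian integral
\begin{displaymath}
  \int_{\R^2} \exp\bigl(i\pi x^t \tau x + 2\pi i v^t x\bigr)\,dx = \det(-i\tau)^{-1/2}\,\exp\bigl(-i\pi v^t \tau^{-1} v\bigr)
\end{displaymath}
produces the automorphy factor $\det(C\tau+D)^{1/2}$ and the exchange $(a',b') = (-b,a)$ up to sign. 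The corrective terms $(CD^t)_0$ and $(AB^t)_0$ appearing in the statement are precisely the half-integral shifts needed so that $(a',b')$ lies in $\{0,1\}^2$ modulo~$2$ after an arbitrary composition.

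It then remains to verify that both sides of the identity define cocycles in $\gamma$. Concretely, if $\gamma = \gamma_1 \gamma_2$, one checks that the affine action $\vect{a}{b} \mapsto \gamma^t\vect{a - (CD^t)_0}{b - (AB^t)_0}$ is compatible with composition, that the product of holomorphic square roots $\det(C\tau+D)^{1/2}$ is multiplicative up to a sign absorbed into $\kappa$, and that the phase $\zeta_8^{\eps(\gamma,a,b)} \kappa(\gamma)$ satisfies the matching cocycle relation. Combined with the generator-level verifications above, an induction on the word length yields the general formula.

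The main obstacle will be the precise bookkeeping of the eighth roots of unity. The factor $\kappa(\gamma)$ is only well defined up to $\pm 1$ once a branch of $\det(C\tau+D)^{1/2}$ has been fixed, and the quadratic form $\eps(\gamma,a,b)$ must be shown to be the unique correction making the composition rule hold. This reduces to comparing two explicit quadratic forms in the shifted characteristics whose coefficients depend only on $\gamma$, which is mechanical but requires care in tracking cross-terms produced by each generator; it is this bookkeeping that dictates the somewhat involved expression for $\eps(\gamma,a,b)$ given in the statement.
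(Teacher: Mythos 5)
The paper does not prove this proposition: it is quoted directly from Igusa's book (Thm.~2 p.~175 and the Corollary on p.~176), so there is no internal proof to compare against. Your outline is essentially the classical proof found in that reference and in Mumford's Tata Lectures: verify the formula on the generators $T_B$, $M_A$, $J$ of $\Sp_4(\Z)$, with Poisson summation carrying the analytic content for $J$, then propagate by a cocycle argument. That strategy is sound, and the generator-level computations you describe are standard (modulo the fact that you should justify that these matrices do generate $\Sp_4(\Z)$ in genus~$2$).

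The genuine gap is that the one thing this precise statement asserts beyond the general shape of the formula --- namely that the eighth root of unity factors as $\kappa(\gamma)\,\zeta_8^{\eps(\gamma,a,b)}$ with $\kappa(\gamma)$ independent of $(a,b)$ and with $\eps$ given by the displayed closed-form quadratic expression in $\alpha,\beta$ --- is exactly the step you defer as ``bookkeeping.'' A word-length induction produces, for each $\gamma$, \emph{some} phase depending on the chosen factorization into generators; to prove the proposition one must still show that this phase is independent of the factorization and equals the stated $\eps(\gamma,a,b)$, which is a genuine computation (the content of Igusa's Corollary) and not a formal consequence of the cocycle property. Since the rest of the paper (\cref{cor:theta-transf}) uses the explicit value of $\eps$ for the matrices $\eta_k^{(n)}$, establishing only the existence of a transformation formula of the right type is not enough. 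Two smaller slips: in the $M_A$ case the characteristics transform as $(a',b')=(A^t a, A^{-1}b)$, not $(A^{-1}a, A^t b)$, consistently with the action of $\gamma^t$ in the statement; and the shifts by $(CD^t)_0$ and $(AB^t)_0$ are needed for the affine action on characteristics to be compatible with composition, not merely to land in $\{0,1\}^2$ modulo~$2$ --- reducing $(\alpha,\beta)$ modulo~$2$ afterwards changes $\theta$ by a further phase that must itself be absorbed into $\eps$.
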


\begin{cor} \label{cor:theta-transf}
  For every $\tau\in \Half_2$, we have the following equalities of projective tuples:
  \begin{align*}
    (\theta_j(2^n\gamma_1\tau))_{0\leq j\leq 3}
    &=
      \begin{cases}
        (\theta_4(\tau) :
        \theta_0(\tau) :
        \theta_6(\tau) :
        \theta_2(\tau)) &\text{if } n=0,\\
        (\theta_0(\tau_1^{(n)}) :
        \theta_4(\tau_1^{(n)}) :
        \theta_2(\tau_1^{(n)}) :
        \theta_6(\tau_1^{(n)})) &\text{if } n\geq1,
      \end{cases}
    \\
    (\theta_j(2^n\gamma_2\tau))_{0\leq j\leq 3}
    &=
      \begin{cases}
        (\theta_8(\tau) :
        \theta_9(\tau) :
        \theta_0(\tau) :
        \theta_1(\tau)) &\text{if } n=0,\\
        (\theta_0(\tau_2^{(n)}) :
        \theta_1(\tau_2^{(n)}) :
        \theta_8(\tau_2^{(n)}) :
        \theta_9(\tau_2^{(n)})) &\text{if } n\geq1,
      \end{cases}
    \\
    (\theta_j(2^n\gamma_3\tau))_{0\leq j\leq 3}
    &=
    (\theta_0(\tau_3^{(n)}) :
    \theta_8(\tau_3^{(n)}) :
    \theta_4(\tau_3^{(n)}) :
    \theta_{12}(\tau_3^{(n)})) \quad\text{ for every } n\geq 0,
    \\
    (\theta_j(2^n\gamma_3\tau))_{0\leq j\leq 3}
    &=
    (\theta_0(\tau_4^{(n)}) :
    \theta_8(\tau_4^{(n)}) :
    \theta_1(\tau_4^{(n)}) :
    \theta_{9}(\tau_4^{(n)})) \quad\ \text{ for every } n\geq 0,
  \end{align*}
  where the $\tau_j^{(n)}$ are defined as in~\eqref{eq:taujn}.
\end{cor}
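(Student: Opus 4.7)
The plan is to apply \cref{prop:transf} with $\gamma = (\eta_k^{(n)})^{-1}$ and base point $\tau_k^{(n)}$: by the preceding lemma we have $\gamma\,\tau_k^{(n)} = 2^n\gamma_k\tau$, so
\[
  \theta_{a,b}(2^n\gamma_k\tau) = \kappa(\gamma)\,\zeta_8^{\eps(\gamma,a,b)}\,
  \det(C\tau_k^{(n)}+D)^{1/2}\,\theta_{a',b'}(\tau_k^{(n)}),
\]
with $(a',b') = (\alpha,\beta)\bmod 2$ and the same applied to $(\eta_4^{(n)})^{-1}$ for the last displayed line. The proof then reduces, for each $k\in\{1,2,3\}$ and each of the four input characteristics $((0,0),b)$ with $b\in\{0,1\}^2$, to two mechanical tasks: (i) identify the output characteristic $(a',b')$; and (ii) verify that the prefactor is the same across the four choices of $b$, so that the four scalar equalities assemble into a projective one.

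First I would compute each inverse via $\gamma^{-1}=\mat{D^t}{-B^t}{-C^t}{A^t}$, read off the $2\times 2$ blocks $A,B,C,D$ and the diagonals $(AB^t)_0$, $(CD^t)_0$, and then apply $\gamma^t$ to the column $\bigl(0,0,b_0 - (AB^t)_0^{(1)},\,b_1 - (AB^t)_0^{(2)}\bigr)^t$ to obtain $(\alpha,\beta)$; reducing modulo~$2$ yields the four characteristics on the right-hand side. For $k=1$ and~$k=2$, one coordinate of $(\alpha,\beta)$ carries a $2^n$, whose parity differs between $n=0$ and $n\ge 1$ and so permutes two of the four output characteristics: this is exactly the case split in the statement. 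For $k=3$, the factor $2^n$ enters only via coordinates that are killed by subsequent matrix multiplication, so a single formula covers every $n\ge 0$.

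The only genuine obstacle is step~(ii): the factors $\kappa(\gamma)$ and $\det(C\tau_k^{(n)}+D)^{1/2}$ do not depend on $(a,b)$, but $\zeta_8^{\eps(\gamma,a,b)}$ a priori does, and a projective equality fails unless $\eps(\gamma,a,b)$ is constant modulo~$8$ in $b$. To handle this I would evaluate the four vectors $B\alpha$, $C\beta$, $D\alpha$, $A\beta$ in each case: in every matrix arising here, either $C$ or $D$ has rank at most one, and $(AB^t)_0$ is supported on coordinates that vanish in $D\alpha - C\beta$. A direct inspection of the four terms defining $\eps$ then shows that $\eps(\gamma,a,b)=0$ for all four choices of $b$, which both confirms the projective equality and completes the corollary.
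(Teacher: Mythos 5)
Your proposal is correct and takes essentially the same route as the paper, whose entire proof is ``Apply \cref{prop:transf} to the matrices $\eta_k^{(n)}$''; applying it to $(\eta_k^{(n)})^{-1}$ at the base point $\tau_k^{(n)}$ instead merely reverses the direction of the same mechanical computation. Your explicit check that $\eps(\gamma,a,b)=0$ for all four characteristics --- the point that turns four scalar identities into a projective equality, and which the paper leaves implicit --- does hold in every case.
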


\begin{proof}
  Apply \cref{prop:transf} to the matrices $\eta_i^{(n)}$.
\end{proof}

When $\tau\in\Fund'$, the real and imaginary parts of $\tau_k^{(n)}$
for $1\leq k\leq 3$ are easy to study: for instance, from the second
inequality in~\eqref{eq:F2} we always have
\begin{displaymath}
  y_3(\tau_k^{(n)})^2\leq \frac14 y_1(\tau_k^{(n)}) y_2(\tau_k^{(n)}).
\end{displaymath}
Such estimates are less obvious for the matrices $\tau_4^{(n)}$.

\begin{lem}
  \label{lem:tau4}
  Let $\tau\in\Fund'$. Then, for every $n\geq 0$, we have
  \begin{align*}
    \babs{y_3(\tau_4^{(n)})} &\leq \frac{3}{2^{n+2}} y_1(\tau_4^{(n)}), \\
    y_3(\tau_4^{(n)})^2 &\leq \frac37 y_1(\tau_4^{(n)}) y_2(\tau_4^{(n)}), \quad\text{and}\\
    \babs{x_2(\tau_4^{(n)})} &\leq \frac{9}{2^{n+3}}.
  \end{align*}
\end{lem}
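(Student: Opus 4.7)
The plan is to read the entries of $\tau_4^{(n)}$ off the formula
\[
\tau_4^{(n)} = \mat{-2^n/z_1}{-z_3/z_1}{-z_3/z_1}{2^{-n}(z_2 - z_3^2/z_1)}
\]
from the preceding lemma. Separating real and imaginary parts gives $y_1(\tau_4^{(n)}) = 2^n y_1/|z_1|^2$, $y_3(\tau_4^{(n)}) = (x_3 y_1 - x_1 y_3)/|z_1|^2$, and, using the identity $z_2 - z_3^2/z_1 = \det(\tau)/z_1$, $z_2(\tau_4^{(n)}) = 2^{-n}\det(\tau)/z_1$. The first bound is then immediate from $|x_3|, |x_1| \leq 1/2$ and $|y_3| \leq y_1/2$: these give $|x_3 y_1 - x_1 y_3| \leq y_1/2 + y_1/4 = 3y_1/4$ and hence $|y_3(\tau_4^{(n)})|/y_1(\tau_4^{(n)}) \leq 3/2^{n+2}$.

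For the second bound I would invoke the standard transformation $\det\im(\tau_4^{(n)}) = \det\im(\tau)/|z_1|^2$ under the symplectic action, which rephrases the claim as $4(x_3 y_1 - x_1 y_3)^2 \leq 3(y_1 y_2 - y_3^2)|z_1|^2$. The left hand side is at most $9y_1^2/4$ by the previous estimate, while $y_1 y_2 - y_3^2 \geq y_1^2 - y_1^2/4 = 3y_1^2/4$ (using $y_2 \geq y_1$ and $|y_3| \leq y_1/2$), combined with $|z_1| \geq 1$, gives $3(y_1 y_2 - y_3^2)|z_1|^2 \geq 9y_1^2/4$, closing the argument.

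The third bound is the main point. Setting $F := x_2 - \re(z_3^2/z_1)$, it is equivalent to $|F| \leq 9/8$. The sign flip $(x_1, x_2, x_3) \mapsto (-x_1, -x_2, -x_3)$ (with $y_1, y_2, y_3$ fixed) preserves $\Fund'$ and sends $F$ to $-F$, so it suffices to show $F \leq 9/8$. Writing $t := y_3/y_1 \in [-1/2, 1/2]$ and using that $z_3 - t z_1 = x_3 - tx_1$ is real, a direct expansion yields the key identity
\[
\re(z_3^2/z_1) = 2tx_3 - t^2 x_1 + \frac{x_1 (x_3 - tx_1)^2}{|z_1|^2}.
\]
When $x_1 \geq 0$ the last summand is nonnegative and so $F \leq x_2 - 2tx_3 + t^2 x_1 \leq 1/2 + 1/2 + 1/8 = 9/8$. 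When $x_1 = -\xi < 0$, the further $(t, x_3) \mapsto (-t, -x_3)$ symmetry allows us to assume $t \geq 0$; set $G := F - x_2 = -2tx_3 - t^2\xi + \xi(x_3 + t\xi)^2/|z_1|^2$. This is convex in $x_3$ and satisfies $G(-1/2) - G(1/2) = 2ty_1^2/|z_1|^2 \geq 0$, so $G$ is maximized at $x_3 = -1/2$, where $G(-1/2) = t - t^2\xi + \xi(1/2 - t\xi)^2/|z_1|^2$. Since $t \leq 1/2$, $t\xi \in [0, 1/4]$ and $|z_1|^2 \geq 1$, the last summand is at most $\xi \cdot (1/2)^2 \leq 1/8$, giving $G(-1/2) \leq 1/2 + 1/8 = 5/8$ and $F \leq 1/2 + 5/8 = 9/8$.

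The main obstacle is this third bound. The constant $9/8$ is tight in the limit $y_1 \to \infty$ — for instance at $x_1 = -1/2$, $x_2 = -1/2$, $x_3 = 1/2$, $y_3 = y_1/2$ one computes $F \to -9/8$ — so any naive triangle-inequality estimate is bound to overshoot. The decomposition above is the crucial step: the correction term $x_1(x_3 - tx_1)^2/|z_1|^2$ has the sign of $x_1$, and this is precisely what permits the separate (and sharp) treatment of the two cases $x_1 \geq 0$ and $x_1 < 0$.
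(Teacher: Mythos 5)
Your proof is correct, and its first two parts coincide with the paper's argument: the paper also reads $y_1(\tau_4^{(n)})=2^ny_1/\abs{z_1}^2$ and $y_3(\tau_4^{(n)})=(x_3y_1-y_3x_1)/\abs{z_1}^2$ off the explicit formula for $\tau_4^{(n)}$, and for the second inequality it uses $\det\im(\tau_4^{(n)})=\det\im(\tau)/\abs{z_1}^2$ together with $\det\im\tau\geq\frac34y_1^2$ and $\abs{z_1}\geq 1$ — your version is the same computation with denominators cleared. The divergence is in the third bound, where you take a genuinely different and much longer route. The paper simply writes $2^nx_2(\tau_4^{(n)})=x_2-\frac{1}{\abs{z_1}^2}\bigl((x_3^2-y_3^2)x_1+2x_3y_3y_1\bigr)$ and applies the triangle inequality term by term: $\abs{x_2}\leq\frac12$, then $\abs{(x_3^2-y_3^2)x_1}/\abs{z_1}^2\leq\frac12\max\{x_3^2,\,y_3^2/\abs{z_1}^2\}\leq\frac18$, and $2\abs{x_3y_3}y_1/\abs{z_1}^2\leq\frac12$, summing to $\frac98$. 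Your premise that ``any naive triangle-inequality estimate is bound to overshoot'' is therefore mistaken: in your own extremal example ($x_1=x_2=-\frac12$, $x_3=\frac12$, $y_3=y_1/2$, $y_1\to\infty$) all three terms attain their suprema simultaneously with aligned signs, so the triangle inequality is tight rather than lossy; the only care needed is to bound $\abs{x_3^2-y_3^2}$ by $\max\{x_3^2,y_3^2\}$ instead of $x_3^2+y_3^2$. Your identity $\re(z_3^2/z_1)=2tx_3-t^2x_1+x_1(x_3-tx_1)^2/\abs{z_1}^2$ with $t=y_3/y_1$, and the ensuing sign and convexity analysis, are correct and have the merit of exhibiting the extremal configuration, but they buy nothing here over the direct estimate.
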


\begin{proof}
  Write~$z_1$ for $z_1(\tau)$, etc. We have
  \begin{displaymath}
    y_3(\tau_4^{(n)}) = \im(-z_3/z_1) = \frac{1}{\abs{z_1}^2}(x_3y_1-y_3x_1),
  \end{displaymath}
  so
  \begin{displaymath}
    \babs{y_3(\tau_4^{(n)})} \leq \frac{3y_1}{4\abs{z_1}^2} = \frac{3}{2^{n+2}} y_1(\tau_4^{(n)}),
  \end{displaymath}
  since~$y_1(\tau_4^{(n)}) = 2^{n} y_1/\abs{z_1}^2$
  by~\eqref{eq:taujn}. For the second inequality, we have
  \begin{displaymath}
    \im(\tau_4^{(n)}) = {\mat{2^{-n}z_1}{-2^{-n}z_3}{0}{1}}^{-t} (2^{-n}\im \tau)
    {\mat{2^{-n}\conj{z}_1}{-2^{-n}\conj{z}_3}{0}{1}}^{-1}
  \end{displaymath}
  so
  \begin{displaymath}
    \det\im(\tau_4^{(n)}) = \frac{1}{\abs{z_1}^2}\det\im\tau.
  \end{displaymath}
  Moreover $\det\im\tau \geq \frac34y_1^2$, so
  \begin{align*}
    \frac{y_3(\tau_4^{(n)})^2}{y_1(\tau_4^{(n)})y_2(\tau_4^{(n)})} &\leq \frac{y_3(\tau_4^{(n)})^2}{y_3(\tau_4^{(n)})^2 + \frac{3y_1^2}{4\abs{z_1}^2}} \leq \frac{1}{1 + \frac43 \abs{z_1}^2} \leq \frac37.
  \end{align*}
  For the last inequality, we compute
  \begin{displaymath}
    2^n x_2(\tau_4^{(n)}) = x_2 - \frac{1}{\abs{z_1}^2}((x_3^2 - y_3^2)x_1 + 2x_3 y_3 y_1)
  \end{displaymath}
  and
  \begin{displaymath}
    \bbabs{\frac{1}{\abs{z_1}^2}(x_3^2 - y_3^2)x_1} \leq \frac12 \max\{x_3^2, \frac{y_3^2}{\abs{z_1}^2}\} \leq \frac18,
  \end{displaymath}
  so
  \begin{displaymath}
    \babs{2^n x_2(\tau_4^{(n)})} \leq \frac12 + \frac18 + \frac12 = \frac98. \qedhere
  \end{displaymath}
\end{proof}

\section{Bounds on theta constants}
\label{sec:bounds}

Typically, when $\tau\in\Half_2$ is close enough to the cusp at
infinity (more precisely when~$\im z_1(\tau),\im z_2(\tau)$,
and~$\det \im(\tau)$ are large), useful information on theta constants
at~$\tau$ can be obtained from the series expansion
\eqref{eq:theta}. Our computations are similar in spirit to those
found in~\cite[pp.\,116--117]{klingen_IntroductoryLecturesSiegel1990},
\cite[§6.2]{dupont_MoyenneArithmeticogeometriqueSuites2006},
\cite[§5.1]{habegger_BadReductionGenus2017}.  All our estimates are
based on the following key lemma.

\begin{lem}
  \label{lem:tail}
  Let $f\from \N\to \R$ be a strictly increasing function, and assume
  that $f(k+2)-f(k+1) \geq f(k+1) - f(k)$ for every $k\geq 0$. Let
  $0<q<1$. Then
  \begin{displaymath}
    \sum_{k=0}^\infty q^{f(k)} \leq \frac{q^{f(0)}}{1 - q^{f(1)-f(0)}}.
  \end{displaymath}
\end{lem}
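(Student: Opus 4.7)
The plan is to reduce the tail bound to a geometric series by exploiting convexity. The hypothesis $f(k+2) - f(k+1) \geq f(k+1) - f(k)$ says exactly that the sequence of forward differences $d_k := f(k+1) - f(k)$ is non-decreasing. In particular, for every $k \geq 0$,
\[
  f(k) - f(0) = \sum_{i=0}^{k-1} d_i \geq k\, d_0 = k\bigl(f(1) - f(0)\bigr).
\]
This is the only place where the convexity hypothesis is used.

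Next, since $0 < q < 1$, the function $x \mapsto q^x$ is strictly decreasing on $\mathbb{R}$, so the previous inequality yields $q^{f(k)} \leq q^{f(0)} \cdot q^{k(f(1)-f(0))}$ for every $k \geq 0$. Summing over $k$ gives a geometric series with ratio $q^{f(1)-f(0)} \in (0,1)$ (the strict inequality $f(1) > f(0)$ from strict monotonicity ensures convergence), and its sum is exactly $q^{f(0)} / (1 - q^{f(1)-f(0)})$, which is the claimed bound.

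There is no real obstacle here: the statement is a soft consequence of convexity plus the geometric series formula, and the two hypotheses (strict monotonicity and convexity of $f$) correspond exactly to the two conditions needed (positivity of the common ratio's exponent, and domination by a geometric sequence). The only minor point to record is that the $k = 0$ term of the bound matches $q^{f(0)}$ exactly, so no constant is lost.
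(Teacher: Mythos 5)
Your proof is correct and follows exactly the same route as the paper, whose entire proof is the one-line remark ``use that $f(k) \geq f(0) + k(f(1)-f(0))$ for all $k$''; you simply spell out how the non-decreasing differences give this inequality and how the geometric series is then summed. Nothing to add.
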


\begin{proof}
  Use that $f(k) \geq f(0) + k(f(1)-f(0))$ for all $k$.
\end{proof}

\begin{lem}
  \label{lem:bound46}
  Let $k\geq 1$, and let $\tau\in\Half_2$ such that
  \begin{displaymath}
    y_3(\tau)^2 \leq \frac14 y_1(\tau) y_2(\tau) \quad\text{and}\quad
    k \abs{y_3(\tau)}\leq y_2(\tau).
  \end{displaymath}
  Define
  \begin{align*}
    \xi_{4,6}(\tau) &= 2\exp \Bigl(i \pi \frac{z_1(\tau)}{4} \Bigr)
  \end{align*}
  and
  \begin{align*}
    \rho_{4,6}^{(k)}(q_1,q_2)
        &= \frac{q_1^2}{1-q_1^4} + \frac{q_2^{1-\frac1k}}{1-q_2^{3-\frac1k}}
          + \frac{q_2^{1+\frac1k}}{1-q_2^{3+\frac1k}} \\
    &\qquad 
          + \frac{ q_1^{7/8} q_2^{1/2}}{(1-q_2^{3/2})(1-q_1^2)}
          + \frac{q_1^{25/8} q_2^{3/2}}{(1-q_2^{9/2})(1- q_1^6)}.
  \end{align*}
  Then for $j\in\{4,6\}$, we have
  \begin{displaymath}
    \abs{\frac{\theta_j(\tau)}{\xi_{4,6}(\tau)} -1} \leq \rho_{4,6}^{(k)}\bigl(q_1(\tau),q_2(\tau)\bigr).
  \end{displaymath}
\end{lem}

\begin{proof}
  Write $u = \svec{1/2}{0}$. Using the definition, we obtain
  \begin{align*}
    \abs{\frac{\theta_j(\tau)}{\xi_{4,6}(\tau)} -1}
    &\leq \frac12\,
      q_1^{-1/4}
      \sum_{\substack{m\in\Z^2\\m\neq \svec{0}{0},\svec{-1}{0}}} \exp\bigl(-\pi (m+u)^t \im(\tau) (m+u)\bigr).
  \end{align*}
  We split this sum in two parts, according to whether the second
  coordinate of~$m$ is zero or not. The first part gives
  \begin{displaymath}
    q_1^{-1/4} \sum_{m\in \N + \frac32} q_1^{m^2}
    \leq q_1^{-1/4} \frac{q_1^{9/4}}{1 - q_1^{4}} = \frac{q_1^2}{1-q_1^4}
  \end{displaymath}
  by \cref{lem:tail}.  The second part is
  \begin{displaymath}
    q_1^{-1/4}
    \sum_{m_1\in \N+\frac12} \sum_{m_2\geq 1}
    q_1^{m_1^2} q_2^{m_2^2}
    \cdot 2\cosh(2\pi y_3 m_1 m_2).
  \end{displaymath}
  We use the fact that for every $(m_1,m_2)\in \R_+^2$,
  \begin{align*}
    \babs{2y_3m_1m_2}   &\leq \frac{y_1}{2} m_1^2 + \frac{y_2}{2} m_2^2.
  \end{align*}
  When $m_1 = 1/2$, we use the following bound instead:
  \begin{displaymath}
    \babs{2y_3 m_1m_2} = \abs{y_3m_2} \leq \frac{y_2 m_2}{k}.
  \end{displaymath}
  Therefore the total contribution of the second part is bounded by
  \begin{align*}
    &q_1^{-1/4} \sum_{m_2\geq 1} q_1^{1/4} q_2^{m_2^2}
      \cdot 2\cosh\left(\pi \frac{y_2}{k} m_2\right) \\
    &\quad + q_1^{-1/4} \sum_{m_1\in\N+\frac32} \sum_{m_2\geq 1}
      q_1^{m_1^2}q_2^{m_2^2}
      \cdot 2\cosh\left(\pi \left(\frac{y_1}{2} m_1^2+ \frac{y_2}{2}m_2^2\right)\right) \\
    &\leq \frac{q_2^{1 - \frac1k}}{1-q_2^{3-\frac1k}} + \frac{q_2^{1+\frac1k}}{1-q_2^{3+\frac1k}}
      + \frac{ q_1^{7/8} q_2^{1/2}}{(1-q_2^{3/2})(1-q_1^2)} + \frac{q_1^{25/8} q_2^{3/2}}{(1-q_2^{9/2})(1- q_1^6)}
  \end{align*}
  by other applications of \cref{lem:tail}.
\end{proof}

\begin{lem}
  \label{lem:bound89}
  Let $k\geq 1$, and let $\tau\in\Half_2$ such that
  \begin{displaymath}
    y_3(\tau)^2 \leq \frac14 y_1(\tau) y_2(\tau) \quad\text{and}\quad
    k \abs{y_3(\tau)}\leq y_1(\tau).
  \end{displaymath}
  Define
  \begin{align*}
    \xi_{8,9}(\tau) &= 2\exp \Bigl(i \pi \frac{z_2(\tau)}{4} \Bigr),
  \end{align*}
  and
  \begin{align*}
    \rho_{8,9}^{(k)}(q_1,q_2)
        &= \frac{q_2^2}{1-q_2^4} + \frac{q_1^{1-\frac1k}}{1-q_1^{3-\frac1k}}
          + \frac{q_1^{1+\frac1k}}{1-q_1^{3+\frac1k}} \\
    &\qquad 
          + \frac{ q_2^{7/8} q_1^{1/2}}{(1-q_1^{3/2})(1-q_2^2)}
          + \frac{q_2^{25/8} q_1^{3/2}}{(1-q_1^{9/2})(1- q_2^6)}.
  \end{align*}
  Then for $j\in\{8,9\}$, we have
  \begin{displaymath}
    \abs{\frac{\theta_j(\tau)}{\xi_{8,9}(\tau)} -1} \leq \rho_{8,9}^{(k)}\bigl(q_1(\tau),q_2(\tau)\bigr).
  \end{displaymath}
\end{lem}

\begin{proof}
  We proceed in a similar fashion as in the proof of~\cref{lem:bound46} by
  switching the roles of $q_1$ and $q_2$.
\end{proof}

\begin{lem}
  \label{lem:bound012}
  Let $\tau\in\Half_2$ such that
  \begin{displaymath}
    y_3(\tau)^2 \leq \frac14 y_1(\tau) y_2(\tau).
  \end{displaymath}
  Define
  \begin{align*}
    \xi_0(\tau) &= 1 + 2\exp(i\pi z_1(\tau)) + 2\exp(i\pi z_2(\tau)), \\
    \xi_{0,2}(\tau) &= 1 + 2\exp(i\pi z_1(\tau)),\\
    \xi_{0,1}(\tau) &= 1 + 2\exp(i\pi z_2(\tau)),
  \end{align*}
  and
  \begin{align*}
    \rho_0(q_1,q_2) &= \frac{2q_1^4}{1-q_1^5} + \frac{2q_2^4}{1-q_2^5}
      + \frac{2 q_1^{1/2} q_2^{1/2}}{(1-q_1^{3/2})(1-q_2^{3/2})}
      + \frac{2 q_1^{3/2} q_2^{3/2}}{(1-q_1^{9/2})(1-q_2^{9/2})}.
  \end{align*}
  Then we have
  \begin{align*}
    \abs{\theta_0(\tau) - \xi_0(\tau)} &\leq \rho_0(q_1(\tau),q_2(\tau)), \\
    \abs{\theta_j(\tau) - \xi_{0,2}(\tau)} &\leq \rho_0(q_1(\tau),q_2(\tau)) + 2q_2(\tau)
                                             &\text{for } j\in\{0,2\},\\    
    \abs{\theta_j(\tau) - \xi_{0,1}(\tau)} &\leq \rho_0(q_1(\tau),q_2(\tau)) + 2q_1(\tau)
                                             &\text{for } j\in\{0,1\},&\text{ and}\\
    \abs{\theta_j(\tau) - 1}&\leq \rho_0(q_1(\tau),q_2(\tau)) + 2q_1(\tau) + 2q_2(\tau)
                              &\text{for } 0\leq j\leq 3.
  \end{align*}
\end{lem}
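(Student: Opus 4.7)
The plan is to expand each $\theta_{(0,0),b}(\tau)=\sum_{m\in\Z^2}\exp(i\pi(m^t\tau m+m^tb/2))$ from~\eqref{eq:theta}, isolate the contributions of $m=0$, $m=\pm(1,0)$, and $m=\pm(0,1)$, and control the remaining tail. Pairing $m$ with $-m$, these three leading groups are $1$, $2e^{i\pi z_1(\tau)}\cos(\pi b_0/2)$, and $2e^{i\pi z_2(\tau)}\cos(\pi b_1/2)$; each cosine equals $1$ when the corresponding $b_i$ vanishes and $0$ otherwise. Translating through $j=b_0+2b_1$, the leading parts of $\theta_0,\theta_1,\theta_2,\theta_3$ are therefore $\xi_0,\xi_{0,1},\xi_{0,2},1$ respectively. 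The four inequalities of the statement then follow from the triangle inequality: when the chosen approximation $\xi$ matches the true leading part of $\theta_j$ only the tail remains, and each missing leading term $2e^{i\pi z_i(\tau)}$ contributes at most $2q_i$ to the error.

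It remains to bound
\begin{displaymath}
T(\tau)=\sum_{m\in\Z^2\setminus\{(0,0),\pm(1,0),\pm(0,1)\}}\exp(-\pi\im(m^t\tau m))
\end{displaymath}
by $\rho_0(q_1(\tau),q_2(\tau))$. I split $T$ into three pieces: (a) $m=(\pm k,0)$ with $k\geq2$, (b) $m=(0,\pm k)$ with $k\geq2$, and (c) both coordinates nonzero. Piece (a) equals $2\sum_{k\geq2}q_1^{k^2}$, and \cref{lem:tail} applied to $f(k)=(k+2)^2$ bounds it by $2q_1^4/(1-q_1^5)$; piece (b) is symmetric, producing the first two terms of $\rho_0$. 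For (c), pairing $m$ with $-m$ and then $m_2$ with $-m_2$ at fixed $m_1\geq1$ bounds it above by
\begin{displaymath}
4\sum_{m_1,m_2\geq1}q_1^{m_1^2}q_2^{m_2^2}\cosh(2\pi y_3m_1m_2).
\end{displaymath}
From $y_3^2\leq y_1y_2/4$, AM-GM gives $|2y_3m_1m_2|\leq(y_1m_1^2+y_2m_2^2)/2$, so $\cosh(2\pi y_3m_1m_2)\leq\tfrac12(e^{\pi(y_1m_1^2+y_2m_2^2)/2}+e^{-\pi(y_1m_1^2+y_2m_2^2)/2})$. Substituting and expanding, piece (c) is at most
\begin{displaymath}
2\sum_{m_1,m_2\geq1}\bigl(q_1^{m_1^2/2}q_2^{m_2^2/2}+q_1^{3m_1^2/2}q_2^{3m_2^2/2}\bigr),
\end{displaymath}
and both double sums factor; applying \cref{lem:tail} to $f(k)=(k+1)^2/2$ and $f(k)=3(k+1)^2/2$ produces the third and fourth terms of $\rho_0$.

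The argument is mostly bookkeeping; the only subtle step is the $\cosh$ manipulation in piece (c), without which one would obtain a factor $4$ instead of the sharp $2$ appearing in the third term of $\rho_0$. Since this is the same device already used in the proof of \cref{lem:bound46}, I do not expect any real obstacle.
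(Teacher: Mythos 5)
Your proof is correct and is exactly the argument the paper intends: its proof of this lemma is just ``Similar (but easier)'' with reference to \cref{lem:bound46}, and your decomposition (isolate $m=0,\pm(1,0),\pm(0,1)$, bound the axis tails and the cross terms via \cref{lem:tail} and the AM--GM/$\cosh$ device) reproduces that technique and recovers each term of $\rho_0$ precisely. No issues.
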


\begin{proof}
  We proceed again in a similar fashion as in the proof
  of~\cref{lem:bound46}. The terms of~$\rho_0(q_1,q_2)$ are obtained
  by considering the following subsets of indices~$m\in \Z^2$:
  \begin{displaymath}
    \{\svec{m_1}{0} \ |\ \abs{m_1}\geq 2\}, \quad \{\svec{0}{m_2}\ |\ \abs{m_2}\geq 2\},
  \end{displaymath}
  and
  \begin{displaymath}
    \quad\{\svec{m_1}{m_2}\ |\ \abs{m_1}\geq 1,\abs{m_2}\geq 1\}. \qedhere
  \end{displaymath}
\end{proof}

\begin{lem}
  \label{lem:bound12}
  Let $\tau\in\Half_2$ such that
  \begin{displaymath}
    \abs{x_3(\tau)}\leq \frac12 \quad \text{and}\quad
    2 \abs{y_3(\tau)} \leq \min\{y_1(\tau),y_2(\tau)\}.
  \end{displaymath}
  Write
  \begin{align*}
    \xi_{12}(\tau)
    &= \exp\left(i\pi\frac{z_1(\tau)+z_2(\tau)}{4}\right)
      \left(
      \exp\left(i\pi \frac{z_3(\tau)}{2}\right)
      +\exp\left(-i\pi \frac{z_3(\tau)}{2}\right)\right),
  \end{align*}
  and
  \begin{align*}
    \rho_{12}(q_1,q_2) &= \frac{q_1^{3/2}}{1-q_1^{7/2}} + \frac{q_1^{5/2}}{1 - q_1^{9/2}}
                         + \frac{q_2^{3/2}}{1-q_2^{7/2}} + \frac{q_2^{5/2}}{1-q_2^{9/2}} \\
        &\qquad + \frac{q_1^{7/8}q_2^{7/8}}{(1-q_1^2)(1-q_2^2)}
          + \frac{q_1^{25/8}q_2^{25/8}}{(1-q_1^6)(1-q_2^6)}.
  \end{align*}
  Then we have
  \begin{displaymath}
    \abs{\frac{\theta_{12}(\tau)}{\xi_{12}(\tau)}-1} \leq \rho_{12}\bigr(q_1(\tau),q_2(\tau)\bigr).
  \end{displaymath}
\end{lem}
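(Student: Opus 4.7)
The plan is to follow the template of \cref{lem:bound46} and its siblings. The theta constant $\theta_{12}$ has characteristic $(a,b) = ((1,1),(0,0))$, so writing $u = (1/2,1/2)$ the series \eqref{eq:theta} becomes
\[
  \theta_{12}(\tau) = \sum_{m\in\Z^2} \exp\bigl(i\pi(m+u)^t\tau(m+u)\bigr).
\]
The quadratic form $(m+u)^t\im(\tau)(m+u)$ attains its minimum (ignoring the cross term) on the four lattice points $m\in\{0,-1\}^2$, each with $|m+u|^2 = 1/2$; a direct computation shows that the corresponding four exponentials combine to (a constant multiple of) $\xi_{12}(\tau)$, since $m=(0,0)$ and $m=(-1,-1)$ yield the exponential $\exp(i\pi(z_1+z_2+2z_3)/4)$, while $m=(-1,0)$ and $m=(0,-1)$ yield $\exp(i\pi(z_1+z_2-2z_3)/4)$.

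First I would lower-bound $|\xi_{12}(\tau)|$. A short calculation yields
\[
  |\xi_{12}(\tau)|^2 = 2\,q_1^{1/2}q_2^{1/2}\bigl(\cosh(\pi y_3)+\cos(\pi x_3)\bigr),
\]
and the hypothesis $|x_3|\leq 1/2$ ensures $\cos(\pi x_3)\geq 0$, so $|\xi_{12}(\tau)|\gtrsim q_1^{1/4}q_2^{1/4}$.

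Next I would split the tail of $\theta_{12}(\tau)/\xi_{12}(\tau)-1$ into three regions according to whether each index $m_i$ lies in $\{0,-1\}$. In \textbf{Region A} ($m_1\in\{0,-1\}$, $|m_2+1/2|\geq 3/2$), pairing $m_1=0$ with $m_1=-1$ produces a factor $2\cosh(\pi y_3(m_2+1/2))$, which I would control using $2|y_3|\leq y_2$; expanding the $\cosh$ into its two exponentials and applying \cref{lem:tail} twice, with $(f(0),f(1)-f(0))$ equal to $(3/2,7/2)$ and $(5/2,9/2)$ respectively, produces the two $q_2$-terms of $\rho_{12}$ (after normalisation by $|\xi_{12}|$). \textbf{Region B} ($|m_1+1/2|\geq 3/2$, $m_2\in\{0,-1\}$) is symmetric under exchange of $y_1\leftrightarrow y_2$ and yields the two $q_1$-terms. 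In \textbf{Region C} ($|m_i+1/2|\geq 3/2$ for both $i$), the consequence $4y_3^2\leq y_1 y_2$ of $2|y_3|\leq\min(y_1,y_2)$ together with AM-GM gives
\[
  2|y_3(m_1+1/2)(m_2+1/2)|\leq \tfrac{1}{2}\bigl(y_1(m_1+1/2)^2+y_2(m_2+1/2)^2\bigr),
\]
so the exponent becomes separable and the sum factorises; two applications of \cref{lem:tail}, one to the leading slice with $f(k)=(3/2+k)^2/2$ and one to the remaining slice with $f(k)=(5/2+k)^2/2$, yield the cross-term contributions $q_1^{7/8}q_2^{7/8}/((1-q_1^2)(1-q_2^2))$ and $q_1^{25/8}q_2^{25/8}/((1-q_1^6)(1-q_2^6))$ after dividing by $|\xi_{12}|$.

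The main obstacle is the bookkeeping: in each sub-sum one must pick the correct variant of the cross-term bound (one of $|y_3|\leq y_1/2$, $|y_3|\leq y_2/2$, or the AM-GM consequence above) so that the values of $f(0)$ and $f(1)-f(0)$ produced by \cref{lem:tail} line up exactly with the rationals $\{3/2,5/2,7/8,25/8\}$ and $\{7/2,9/2,2,6\}$ appearing in the definition of $\rho_{12}$. Once the regions and cross-term bounds are chosen, the remainder of the argument is entirely mechanical, in complete parallel with the proofs of \cref{lem:bound46,lem:bound89,lem:bound012}.
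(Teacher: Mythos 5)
Your proposal follows the paper's proof essentially step for step: the same grouping of the four $|m+u|^2=\tfrac12$ terms into (twice) $\xi_{12}$, the same lower bound $\abs{\exp(i\pi z_3/2)+\exp(-i\pi z_3/2)}\geq \exp(\pi\abs{y_3}/2)$ extracted from $\abs{x_3}\leq\tfrac12$, the same three-region split according to whether each index is minimal, and the same choice of cross-term majoration in each region feeding into \cref{lem:tail} with identical exponents. The one bookkeeping slip is in your Region C: the two contributions $q_1^{7/8}q_2^{7/8}/((1-q_1^2)(1-q_2^2))$ and $q_1^{25/8}q_2^{25/8}/((1-q_1^6)(1-q_2^6))$ arise from the two exponentials of the $\cosh$ (that is, from $f(k)=\tfrac12(k+\tfrac32)^2-\tfrac14$ and $f(k)=\tfrac32(k+\tfrac32)^2-\tfrac14$, the latter giving the gap $f(1)-f(0)=6$), not from a leading slice at $3/2$ and a remaining slice at $5/2$, which would instead produce a denominator $1-q^3$.
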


\begin{proof}
  By~\eqref{eq:theta}, we have
  \begin{align*}
    \theta_{12}(\tau)
    = 2\sum_{m_1\in \N+\frac12} \sum_{m_2\in\N+\frac12}
    &\exp
       \bigl(i\pi(m_1^2z_1 + m_2^2z_2)\bigr) \\
    &\cdot
      \bigl(\exp(2\pi i m_1m_2 z_3)+ \exp(-2\pi i m_1m_2z_3)\bigr).
  \end{align*}
  We leave the term corresponding to $(m_1,m_2) = (\frac12,\frac12)$ aside, and write
  \begin{align*}
    &\abs{\frac{\theta_{12}(\tau)}{2\exp(i\pi(z_1+z_2)/4)} -
      (\exp(i\pi z_3/2)+\exp(-i\pi z_3/2))} \\
    &\leq \sum_{\substack{(m_1,m_2)\in(\N+\frac12)^2\\(m_1,m_2)\neq(\frac12,\frac12)}}
    q_1^{m_1^2-\frac14} q_2^{m_2^2-\frac14}\cdot 2\cosh(2\pi m_1m_2 y_3).
  \end{align*}
  Since $\abs{x_3}\leq \frac12$, the absolute value of the argument of
  $\exp(i\pi z_3/2)$ is at most~$\pi/4$. Therefore,
  \begin{displaymath}
    \babs{\exp(i\pi z_3/2) + \exp(-i\pi z_3/2)} \geq \exp(\pi\abs{y_3}/2).
  \end{displaymath}
   We obtain
  \begin{align*}
    \abs{\frac{\theta_{12}(\tau)}{\xi_{12}(\tau)} - 1}
    &\leq \sum_{\substack{(m_1,m_2)\in(\N+\frac12)^2\\(m_1,m_2)\neq(\frac12,\frac12)}}
    q_1^{m_1^2-\frac14} q_2^{m_2^2-\frac14}
    \cdot 2\cosh\bigl(2\pi (m_1m_2-\tfrac14) y_3\bigr).
  \end{align*}
  We separate the terms corresponding to $m_2=\frac12$. Since
  $2\abs{y_3}\leq y_1$, their contribution is bounded by
  \begin{align*}
    \sum_{m_1\in \N+\frac32} \Bigl( q_1^{m_1^2 - \frac 12 m_1} + q_1^{m_1^2 + \frac12 m_1 - \frac12} \Bigr)
    &\leq \frac{q_1^{3/2}}{1-q_1^{7/2}} + \frac{q_1^{5/2}}{1 - q_1^{9/2}}.
  \end{align*}
  Similarly, the contribution from the terms with $m_1=1/2$ is bounded by
  \begin{displaymath}
    \frac{q_2^{3/2}}{1-q_2^{7/2}} + \frac{q_2^{5/2}}{1-q_2^{9/2}}.
  \end{displaymath}
  For the remaining terms, we use the majoration
  \begin{displaymath}
    \babs{2\pi (m_1m_2-\tfrac14) y_3} \leq \babs{2\pi m_1m_2y_3}
    \leq \pi\abs{y_3}(m_1^2+m_2^2) \leq \frac\pi2 (m_1^2 y_1 + m_2^2 y_2).
  \end{displaymath}
  Thus, the rest of the sum is bounded by
  \begin{align*}
    &\sum_{m_1,m_2\in\N+\frac32} 
      q_1^{m_1^2-\frac14} q_2^{m_2^2-\frac14}
      \cdot 2\cosh\left(\frac{\pi}{2}(m_1^2y_1 + m_2^2 y_2)\right) \\
    &\leq \sum_{m_1,m_2\in\N+\frac32} \Bigl(
      q_1^{\frac12 m_1^2 - \frac14} q_2^{\frac12 m_2^2 -\frac14}
      + 
      q_1^{\frac32 m_1^2 - \frac14} q_2^{\frac32 m_2^2 - \frac14}
      \Bigr) \\
    &\leq \frac{q_1^{7/8}q_2^{7/8}}{(1-q_1^2)(1-q_2^2)} + \frac{q_1^{25/8}q_2^{25/8}}{(1-q_1^6)(1-q_2^6)}.
      \qedhere
  \end{align*}
\end{proof}

We give another version of these estimates that we will use for~$\tau_4^{(n)}$.

\begin{lem}
  \label{lem:bound0prime}
  Let $k\geq 2$, and let $\tau\in\Half_2$ such that
  \begin{displaymath}
    y_3(\tau)^2\leq \frac{3}{7}\, y_1(\tau) y_2(\tau) \quad\text{and}\quad
    k\abs{y_3(\tau)} \leq y_1(\tau).
  \end{displaymath}
  Let $\alpha = \sqrt{3/7}$. Define
  \begin{align*}
    \rho_{0,1}'^{(k)}(q_1,q_2)
    &= 
    \frac{2q_2^4}{1-q_2^5} + \frac{2q_1}{1-q_1^3} + \frac{2 q_1^{1-\frac2k}q_2}{1-q_1^{3-\frac2k}}
    + \frac{2q_1^{1+\frac2k}q_2}{1-q_1^{3+\frac2k}}\\
    &\qquad + \frac{2q_1^{1-\alpha} q_2^{4(1-\alpha)}}{(1-q_1^{3(1-\alpha)})(1-q_2^{5(1-\alpha)})}
      + \frac{2q_1^{1+\alpha} q_2^{4(1+\alpha)}}{(1-q_1^{3(1+\alpha)})(1-q_2^{5(1+\alpha)})}
  \end{align*}
  and
  \begin{align*}
    \rho_{8,9}'^{(k)}(q_1,q_2)
    &=
        \frac{q_2^2}{1-q_2^4} + \frac{q_1^{1-\frac1k}}{1-q_1^{3-\frac1k}}
        + \frac{q_1^{1+\frac1k}}{1-q_1^{3+\frac1k}} \\
    &\quad + \frac{q_2^{2-\frac94\alpha}q_1^{1-\alpha}}{(1-q_2^{4(1-\alpha)})(1-q_1^{3(1-\alpha)})}
      + \frac{q_2^{2+\frac94\alpha}q_1^{1+\alpha}}{(1-q_2^{4(1+\alpha)})(1-q_1^{3(1+\alpha)})}.
  \end{align*}
  Then we have
  \begin{align*}
    \abs{\theta_j(\tau)-\xi_{0,1}(\tau)} \leq \rho_{0,1}'(\tau) \quad \text{for } j\in\{0,1\}
  \end{align*}
  and
  \begin{align*}
    \abs{\frac{\theta_j(\tau)}{\xi_{8,9}(\tau)}-1} \leq \rho_{8,9}'(\tau) \quad \text{for } j\in\{8,9\}.
  \end{align*}
\end{lem}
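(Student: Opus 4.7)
The proof follows the template of \cref{lem:bound89} and \cref{lem:bound012}, the only new ingredient being that the relaxed hypothesis $y_3^2 \leq \tfrac{3}{7} y_1 y_2$ forces us to replace the convenient factor $1/2$ in the cross-term estimate by the constant $\alpha = \sqrt{3/7}$; this is what produces the $1\pm\alpha$ exponents in the last two summands of each $\rho'$.

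For $j\in\{0,1\}$, the plan is to expand $\theta_j(\tau) - \xi_{0,1}(\tau)$ using~\eqref{eq:theta}, take absolute values, and split the index $m \in \Z^2 \setminus\{(0,0),(0,\pm 1)\}$ into four regions: (i)~$m_1 = 0$, $|m_2|\geq 2$; (ii)~$m_2 = 0$, $|m_1|\geq 1$; (iii)~$|m_1|\geq 1$, $|m_2|=1$; (iv)~$|m_1|\geq 1$, $|m_2|\geq 2$. Pairing opposite vectors $m$ merges each orbit into $2\cosh(2\pi y_3 m_1 m_2)$ times a monomial in $q_1,q_2$. Regions (i) and (ii) carry no cross term and yield, via \cref{lem:tail}, the first two summands of $\rho_{0,1}'^{(k)}$. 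In region (iii), the bound $|2y_3 m_1 m_2| \leq (2/k) y_1 |m_1|$ coming from $k|y_3| \leq y_1$ turns $2\cosh(2\pi y_3 m_1 m_2)$ into $q_1^{-2|m_1|/k} + q_1^{2|m_1|/k}$, and another application of \cref{lem:tail} delivers the two summands with exponent $1 \pm 2/k$. In region (iv), the bound $|2y_3 m_1 m_2| \leq \alpha(y_1 m_1^2 + y_2 m_2^2)$, coming from $|y_3| \leq \alpha\sqrt{y_1 y_2}$ and AM--GM, turns the cosine into $q_1^{-\alpha m_1^2} q_2^{-\alpha m_2^2} + q_1^{\alpha m_1^2} q_2^{\alpha m_2^2}$; the sum then factors as a product of one-variable geometric-like series, giving the last two summands.

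For $j\in\{8,9\}$, I would proceed identically after setting $u = (0,\tfrac{1}{2})$: write $\theta_j/\xi_{8,9} - 1$ as $\tfrac{1}{2} q_2^{-1/4}$ times a sum over $(m_1,m_2')\in \Z\times (\Z+\tfrac{1}{2})\setminus\{(0,\pm\tfrac{1}{2})\}$, split into the analogous three regions, and apply the same two cross-term majorations. The exponents $q_2^{2\mp 9\alpha/4}$ appearing in the last two summands arise as $q_2^{9(1\mp\alpha)/4 - 1/4}$ once the prefactor $q_2^{-1/4}/2$ has been absorbed.

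The main obstacle here is not mathematical but clerical: one has to keep careful track of the multiplicities ($2$ or $4$) coming from the sign combinations $(\pm m_1,\pm m_2)$ and to verify that the resulting exponents match $\rho_{0,1}'^{(k)}$ and $\rho_{8,9}'^{(k)}$ exactly. The hypothesis $k \geq 2$ is precisely what ensures that $1 - 2/k$ and $1 - 1/k$ remain nonnegative, so that the geometric series produced by \cref{lem:tail} indeed converge.
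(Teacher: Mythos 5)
Your proposal matches the paper's proof in all essentials: the same region-by-region decomposition of the index set, the same three cross-term majorations ($\abs{2y_3m_1m_2}\leq\alpha(y_1m_1^2+y_2m_2^2)$ in general, and $\abs{2y_3m_1m_2}\leq \tfrac1k y_1 m_1$ or $\tfrac2k y_1 m_1$ on the rows $m_2=\tfrac12$ or $m_2=1$), followed by \cref{lem:tail} applied factor by factor. The only slight imprecision is the closing remark: the series from \cref{lem:tail} converge for any $k\geq 1$; the role of $k\geq 2$ is just to keep the leading exponents $1-\tfrac2k$ and $1-\tfrac1k$ nonnegative so the resulting bounds are usable.
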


\begin{proof}
  We bound the cross-product terms by
  \begin{align*}
    \babs{2y_3m_1m_2}&\leq \alpha y_1 m_1^2 + \alpha y_2m_2^2,\\
    \babs{2y_3m_1m_2}&\leq \frac 1k y_1 m_1 &\text{if } m_2=\frac12, &\quad\text{and}\\
    \babs{2y_3m_1m_2}&\leq \frac 2k y_1 m_1  &\text{if } m_2=1.
  \end{align*}
  For $j\in\{0,1\}$, we separate the terms with $\abs{m_2}\leq 1$ or
  $m_1=0$, and obtain
  \begin{align*}
    \abs{\theta_j(\tau)-\xi_{0,1}(\tau)}
    &\leq 2\sum_{m_2\geq 2} q_2^{m_2^2} + 2\sum_{m_1\geq 1} q_1^{m_1^2}
          +
          2\sum_{m_2\geq 1} q_2(q_1^{m_1^2 - \frac2k m_1} + q_1^{m_1^2+\frac2k m_1}) \\
    &\qquad + 2\sum_{m_1\geq 1} \sum_{m_2\geq 2} q_1^{m_1^2} q_2^{m_2^2}
      \cdot 2\cosh\left(\alpha(y_1m_1^2+ y_2m_2^2)\right) \\
    &\leq \frac{2q_2^4}{1-q_2^5} + \frac{2q_1}{1-q_1^3} + \frac{2 q_1^{1 - \frac2k}q_2}{1-q_1^{3-\frac2k}} + \frac{2q_1^{1+\frac2k}q_2}{1-q_1^{3+\frac2k}}\\
    &\qquad + \frac{2q_1^{1-\alpha} q_2^{4(1-\alpha)}}{(1-q_1^{3(1-\alpha)})(1-q_2^{5(1-\alpha)})}
      + \frac{2q_1^{1+\alpha} q_2^{4(1+\alpha)}}{(1-q_1^{3(1+\alpha)})(1-q_2^{5(1+\alpha)})}.
  \end{align*}
  For $j\in\{8,9\}$, we separate the terms with $\abs{m_2}=\frac12$ or $m_1=0$. We obtain
  \begin{align*}
    \abs{\frac{\theta_j(\tau)}{\xi_{8,9}(\tau)} - 1}
    &\leq q_2^{-1/4} \sum_{m_2\in \N+\frac32} q_2^{m_2^2} + \sum_{m_1\geq 1} \left(q_1^{m_1^2 - \frac1k m_1} + q_1^{m_1^2 + \frac1k m_1}\right)\\
    &\quad + q_2^{-1/4} \sum_{m_2\in\N+\frac32} \sum_{m_1\geq 1} q_2^{m_2^2} q_1^{m_2^2}
      \cdot 2\cosh\left(\alpha(y_1m_1^2+ y_2m_2^2)\right)
    \\
    &\leq
      \frac{q_2^2}{1-q_2^4} + \frac{q_1^{1-\frac1k}}{1-q_1^{3-\frac1k}}
      + \frac{q_1^{1+\frac1k}}{1-q_1^{3+\frac1k}} \\
    &\quad + \frac{q_2^{2-\frac94\alpha}q_1^{1-\alpha}}{(1-q_2^{4(1-\alpha)}(1-q_1^{3(1-\alpha)})}
      + \frac{q_2^{2+\frac94\alpha}q_1^{1+\alpha}}{(1-q_2^{4(1+\alpha)})(1-q_1^{3(1+\alpha)})}.
      \qedhere
  \end{align*}
\end{proof}

Finally, when~$n$ is large, we will show that the theta
constants~$\theta_j(2^n\gamma_k\tau)$ for $0\leq j\leq 3$ are in good
position using the following lemma. Recall the definition of~$r(\tau)$
and~$\lambda_1(\tau)$ from~§\ref{sec:borchardt}.

\begin{lem} \label{lem:goodpos}
  Let $\tau\in\Half_2$.
  \begin{enumerate}
  \item 
    If $r(\tau)\geq 0.4$, then the $\theta_j(\tau)$ for
    $0\leq j\leq 3$ are in good position.
  \item If $\lambda_1(\tau)\geq 0.6$, then the $\theta_j(\tau)$ for
    $0\leq j\leq 3$ are in good position.
  \end{enumerate}
\end{lem}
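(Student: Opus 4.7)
The plan is to show that, under either hypothesis, each of $\theta_0(\tau),\ldots,\theta_3(\tau)$ lies strictly within distance $\sin(\pi/4)=\sqrt{2}/2$ of $1$. Any complex number in such a disk has argument in the open interval $(-\pi/4,\pi/4)$, so all four theta values then share the open quarter plane $\{z : |\arg z|<\pi/4\}$ and are in good position. Since the characteristic $a$ vanishes for each $\theta_j$ with $0\leq j\leq 3$, the constant ($m=0$) term in the series \eqref{eq:theta} equals exactly $1$, hence
\begin{equation*}
\abs{\theta_j(\tau) - 1} \leq \sum_{m\in\Z^2\setminus\{0\}} \exp(-\pi m^t \im(\tau)\, m),
\end{equation*}
and the whole proof reduces to majorizing this tail sum by something strictly less than $\sqrt{2}/2$ in each of the two regimes.

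For part (ii), I would use the spectral bound $m^t\im(\tau)\,m \geq \lambda_1(\tau)(m_1^2+m_2^2)$, which lets the tail factor through a one-variable theta: with $q = e^{-\pi\lambda_1}$, it becomes $\bigl(1 + 2\sum_{k\geq 1} q^{k^2}\bigr)^2 - 1$. Applying \cref{lem:tail} to $f(k)=(k+1)^2$ bounds $\sum_{k\geq 1}q^{k^2}$ by $q/(1-q^3)$, and a direct numerical check at $\lambda_1=0.6$ would conclude. For part (i), this spectral bound alone is too weak when $\lambda_1$ is only $0.4$, so I would split the tail according to whether $m_1=0$, $m_2=0$, or both are nonzero. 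For the two ``axis'' pieces I would use the univariate bound $m^t\im(\tau)\,m \geq y_j(\tau)m^2$ together with $y_1,y_2\geq 0.8$, which follow from $r(\tau)\geq 0.4$; for the remaining ``cross'' piece I would fall back on the spectral bound with $\lambda_1\geq 0.4$. In each case \cref{lem:tail} yields a closed-form geometric majoration, and summing the three pieces gives a numerical bound comfortably below $\sqrt{2}/2$.

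The main obstacle is that both estimates are essentially tight: at $\lambda_1=0.6$, the bound produced by this argument is roughly $0.703$ against the threshold $\sqrt{2}/2\approx 0.707$, leaving a margin of about $0.004$. The thresholds $0.4$ and $0.6$ in the statement therefore look like the smallest round values for which the direct ``perturbation of $1$'' approach succeeds. Relaxing them further would seem to require either a finer decomposition exploiting the size of $y_3(\tau)$ relative to $y_1(\tau),y_2(\tau)$, or an altogether different strategy.
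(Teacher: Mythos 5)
Your proposal is correct and takes essentially the same route as the paper: both arguments bound $\abs{\theta_j(\tau)-1}$ by the tail of the series \eqref{eq:theta} and show it is strictly below $\sqrt{2}/2$, using $y_1,y_2\geq 2r$ for the axis terms and the spectral bound $\lambda_1(\tau)\norm{m}^2$ for the remaining terms (the paper separates only the $\norm{m}^2=1$ terms and quotes a tail estimate from Dupont's thesis, whereas you factor the cross terms through a one-variable sum via \cref{lem:tail}, but the resulting numbers are essentially the same). The tightness you observe in part (ii) is genuine --- the paper's own bound also evaluates to roughly $0.703$ at $\lambda_1=0.6$.
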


\begin{proof}
  \begin{enumerate}
  \item 
    Write
    \begin{displaymath}
      q = \exp(-\pi r(\tau)).
    \end{displaymath}
    For $0\leq j\leq 3$, we have
    \begin{equation}
      \label{eq:theta0123}
      \begin{aligned}
        \abs{\theta_j(\tau)-1}
        &\leq 4q^2 + \sum_{n\in\Z^2,\, \norm{n}^2\geq 2} \exp(-\pi \lambda_1(\tau) \norm{n}^2) \\
        &\leq 8q^2 + 4q^4 + 8q^5 + 4q^8 + 4\dfrac{1+q}{(1-q)^2}q^9.
      \end{aligned}
    \end{equation}
    In this inequality, the first term~$4q^2$ comes from the four
    vectors~$n\in\Z^2$ with~$\norm{n}=1$. Then we separate the
    terms~$n = \svec{n_1}{n_2}$ such that $\abs{n_1}\geq 3$
    and~$\abs{n_2}\geq 3$; this accounts for the
    term~$4q^9(1+q)/(1-q)^2$, as in the proof
    of~\cite[Prop.~6.1]{dupont_MoyenneArithmeticogeometriqueSuites2006}. We
    leave the remaining terms as they are.
    
    If $q \leq 0.287$, then the quantity on the right hand side
    of~\eqref{eq:theta0123} is less than~$\sqrt{2}/2$, and the
    $\theta_j(\tau)$ are contained in a disk which is itself contained
    in a quarter plane. We have $q\leq 0.287$ when $r(\tau)\geq 0.4$.
  \item Write
    \begin{displaymath}
      q = \exp(-\pi \lambda_1(\tau)).
    \end{displaymath}
    Then for $0\leq j\leq 3$, we have
    \begin{align*}
      \abs{\theta_j(\tau)-1}
      &\leq 4q + 4q^2 + 4q^4 + 8q^5 + 4q^8 + 4\dfrac{1+q}{(1-q)^2}q^9.
    \end{align*}
    This quantity is less than $\sqrt{2}/2$ when
    $\lambda_1(\tau) \geq 0.6$. \qedhere
  \end{enumerate}
\end{proof}

We conclude this section with lower bounds on~$r$ or~$\lambda_1$
at~$\gamma_k\tau$ for~$\tau\in \Fund'$ and~$1\leq k\leq 3$.

\begin{lem}
  \label{lem:lambda1-gmai}
  For every $\tau\in\Fund'$, we have
  \begin{displaymath}
    r(\gamma_1\tau) \geq \dfrac{9\, y_1(\tau)}{34\abs{z_1(\tau)}^2},
    \quad r(\gamma_2\tau) \geq \dfrac{9\, y_2(\tau)}{34\abs{z_2(\tau)}^2},
    \quad\text{and}\quad \lambda_1(\gamma_3\tau) \geq \dfrac{9}{44\,y_2(\tau)}.
  \end{displaymath}
\end{lem}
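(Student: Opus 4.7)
The plan is to handle each of the three inequalities by first computing $\im(\gamma_k\tau)$ explicitly and then estimating each component via the explicit constraints of $\Fund'$.

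\textbf{Computation of $\im(\gamma_k\tau)$.} For $k=1$, expanding $\gamma_1\tau = (-\tau-S_1)(S_1\tau - I + S_1^2)^{-1}$ gives
\begin{displaymath}
\gamma_1\tau = \mat{-1-1/z_1}{-z_3/z_1}{-z_3/z_1}{z_2 - z_3^2/z_1},
\end{displaymath}
from which $y_1(\gamma_1\tau) = y_1/|z_1|^2$, $y_2(\gamma_1\tau)|z_1|^2 = y_2|z_1|^2 + y_1(x_3^2-y_3^2) - 2x_1x_3y_3$, and $\det\im(\gamma_1\tau) = (y_1y_2-y_3^2)/|z_1|^2$. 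The case $k=2$ is symmetric, swapping $z_1$ and $z_2$. For $k=3$, using $S_3^2 = I$ one writes $\gamma_3\tau = -S_3 - S_3\tau^{-1}S_3$, so $\im(\gamma_3\tau) = S_3\bar\tau^{-1}(\im\tau)\tau^{-1} S_3$ has the same eigenvalues as $\bar\tau^{-1} Y \tau^{-1}$; direct expansion yields $\Tr\im(\gamma_3\tau)\cdot|\det\tau|^2 = y_1|z_2|^2 + y_2|z_1|^2 + (y_1+y_2)(x_3^2-y_3^2) - 2x_3y_3(x_1+x_2)$ and $\det\im(\gamma_3\tau) = (y_1y_2-y_3^2)/|\det\tau|^2$.

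\textbf{Bounds for $\gamma_1$ and $\gamma_2$.} The inequality $y_1(\gamma_1\tau)/2 = y_1/(2|z_1|^2) \geq 9y_1/(34|z_1|^2)$ holds since $17\geq 9$. For $y_2(\gamma_1\tau)/2\geq 9y_1/(34|z_1|^2)$, equivalently $y_2(\gamma_1\tau)|z_1|^2 \geq 9y_1/17$, I lower-bound the explicit expression using $|x_j|\leq 1/2$, $|y_3|\leq y_1/2$, $y_2\geq y_1$, and $|z_1|^2\geq\max(1,y_1^2)$. When $y_1 < 1$, the constraint $|z_1|\geq 1$ forces $|x_1|\geq\sqrt{1-y_1^2}$, and the term $y_2 x_1^2$ then compensates the cross contribution $-2x_1x_3y_3$. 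For $\lambda_1(\gamma_1\tau)\geq 9y_1/(34|z_1|^2)$, apply $\lambda_1\geq \det/\Tr$ with $\det\geq 3y_1y_2/(4|z_1|^2)$ and the upper bound $\Tr\leq 3y_1/(2|z_1|^2) + y_2$ (obtained by estimating $|y_1(x_3^2-y_3^2) - 2x_1x_3y_3|\leq y_1/2$); the resulting inequality reduces to $11y_2|z_1|^2\geq 9y_1$, which follows from $y_2\geq y_1$ and $|z_1|^2\geq 1$. The case of $\gamma_2$ is handled identically with the roles of $z_1$ and $z_2$ interchanged.

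\textbf{Bound for $\gamma_3$.} Apply $\lambda_1\geq\det/\Tr$ directly; the $|\det\tau|^2$ factors cancel in the quotient. Using $y_3^2\leq y_1y_2/4$ (since $y_1\leq y_2$) the numerator is at least $3y_1y_2/4$; the denominator is bounded above by expanding each term with $|x_j|\leq 1/2$, $|y_3|\leq y_1/2$, giving $y_1y_2(y_1+y_2) + (y_1+y_2)/2 + y_1/2$. After clearing, the inequality $\lambda_1\geq 9/(44y_2)$ reduces to the polynomial bound $52y_1y_2^2 \geq 9(2y_1+y_2)$. Since the function $y_2 \mapsto 9y_2/(52y_2^2-18)$ is decreasing for $y_2 > \sqrt{9/26}$, it suffices to verify $y_1\geq 9y_2/(52y_2^2-18)$, which follows at once from $y_1,y_2\geq\sqrt 3/2$.

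\textbf{Main obstacle.} The most delicate step is the bound for $y_2(\gamma_1\tau)$: a naive estimate using only $|z_1|^2\geq\max(1, y_1^2)$ just fails near $y_1=1$, and one must actively use $|x_1|\geq\sqrt{1-y_1^2}$ (which follows from $|z_1|\geq 1$) to make $y_2 x_1^2$ dominate the cross term. The $\gamma_3$ estimate is conceptually cleaner but algebraically tight: it is essential to group terms so that the leading $44y_1y_2^2$ in the numerator absorbs both $9y_1y_2(y_1+y_2)$ from the $y_1|z_2|^2+y_2|z_1|^2$ expansion and the correction terms, using $8y_2-3y_1\geq 5y_2$ via $y_1\leq y_2$.
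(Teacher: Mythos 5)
Your treatment of $\gamma_1$ and $\gamma_2$ is essentially sound and close to the paper's: you both reduce to $\lambda_1 \geq \det/\Tr$ plus separate lower bounds on the diagonal entries of $\im(\gamma_1\tau)$, the only real difference being that you bound $y_2(\gamma_1\tau)$ directly from the explicit matrix (with the case split on $|x_1|\geq\sqrt{1-y_1^2}$, which does close) whereas the paper bounds it via the determinant identity $y_1\beta - \alpha^2 = |z_1|^2\det\im\tau$, which avoids the case split entirely. (Minor point: your claimed bound $|y_1(x_3^2-y_3^2)-2x_1x_3y_3|\leq y_1/2$ is false as an absolute value for large $y_1$, but only the upper bound is needed for the trace, so this is harmless.)

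The $\gamma_3$ part, however, has a genuine gap. First, an arithmetic slip: clearing denominators in $\frac{\frac34 y_1y_2}{y_1y_2(y_1+y_2)+\frac{y_1+y_2}{2}+\frac{y_1}{2}} \geq \frac{9}{44y_2}$ gives $33y_1y_2^2 \geq 9y_1y_2(y_1+y_2) + \frac92(2y_1+y_2)$, and after your own step $8y_2-3y_1\geq 5y_2$ this becomes $30y_1y_2^2\geq 9(2y_1+y_2)$, not $52y_1y_2^2\geq 9(2y_1+y_2)$ (you appear to have dropped the factor $\frac34$ from $\det\im\tau\geq\frac34 y_1y_2$ on the left). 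Second, and more seriously, the corrected inequality is \emph{false} at the corner $y_1=y_2=\frac{\sqrt3}{2}$: there $30y_1y_2^2 = \frac{45\sqrt3}{4}\approx 19.5$ while $9(2y_1+y_2)=\frac{27\sqrt3}{2}\approx 23.4$. The root cause is that your term-by-term bound on the trace is too lossy: you take $(y_1+y_2)(x_3^2-y_3^2)\leq\frac{y_1+y_2}{4}$ (worst case $y_3=0$) and $|2x_3y_3(x_1+x_2)|\leq\frac{y_1}{2}$ (worst case $|y_3|=\frac{y_1}{2}$) simultaneously, but these extremes are incompatible. Optimizing $(y_1+y_2)(x_3^2-y_3^2)+2|x_3y_3(x_1+x_2)|$ jointly over $y_3$ shows the sum is at most $\frac{y_1+y_2}{4}+\frac{1}{4(y_1+y_2)}$, which is the extra room needed to reach the constant $\frac{9}{44}$; the paper's bound $\beta_1+\beta_2\leq y_1y_2^2+y_1^2y_2+\frac12(y_1+y_2+|y_3|)\leq\frac{11}{3}y_1y_2^2$ implicitly relies on this cancellation. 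Without some such joint estimate your argument only proves the lemma with a worse constant, which would then not suffice for the threshold $2^n\geq 2.94\,y_2(\tau)$ used in Proposition~\ref{prop:gamma3}.
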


\begin{proof}
  We have
  \begin{displaymath}
    \im(\gamma_1\tau) = \mat{z_1}{z_3}{0}{-1}^{-t}\im(\tau)
    \mat{\conj{z}_1}{\conj{z}_3}{0}{-1}^{-1}
    = \frac{1}{\abs{z_1}^2}\mat{y_1}{\alpha}{\alpha}{\beta}
  \end{displaymath}
  with $\alpha = y_1 x_3 - y_3 x_1$, so
  $\abs{\alpha}\leq \frac{3}{4}y_1$. Moreover,
  \begin{displaymath}
    \det\im(\gamma_1\tau) = \frac{1}{\abs{z_1}^2}\det\im(\tau)
  \end{displaymath}
  and $\det \im(\tau)\geq 9/16$, so
  \begin{align*}
    \beta&\leq \dfrac{\abs{z_1}^2}{y_1} \det\im(\tau) + \dfrac{9}{16}y_1
           \quad\text{and}\quad
    \beta \geq \frac{\abs{z_1}^2}{y_1} \det\im(\tau) \geq \frac{9}{16}y_1.
  \end{align*}
  Therefore,
  \begin{displaymath}
    \lambda_1(\gamma_1\tau)\geq \dfrac{\det \im(\gamma_1\tau)}{\Tr\im(\gamma_1\tau)}
    \geq
    \frac{y_1}{\abs{z_1}^2}
    \frac{1}{1+\frac{25}{16} \frac{y_1^2}{\abs{z_1}^2\det\im(\tau)}}
    \geq \frac{9 y_1}{34\abs{z_1}^2}.
  \end{displaymath}
  We did not use the property that $y_1\leq y_2$, so the same proof
  works for~$\gamma_2\tau$.  Finally, we consider $\gamma_3\tau$. We
  have
  \begin{displaymath}
    \im(\gamma_3\tau) = \dfrac{1}{\abs{\det\tau}^2} \mat{\beta_1}{\alpha}{\alpha}{\beta_2}
  \end{displaymath}
  with
  \begin{align*}
    \beta_1 &= y_1\abs{z_3}^2 + y_2\abs{z_1}^2 -
    y_3(z_1 \conj{z}_3 + z_3 \conj{z}_1),\\
    \quad \beta_2 &=  y_1\abs{z_2}^2 + y_2\abs{z_3}^2 -
    y_3(z_2 \conj{z}_3 + z_3 \conj{z}_2).
  \end{align*}
  We compute
  \begin{align*}
    \abs{\det\tau}^2 \Tr\im(\gamma_3\tau) = \beta_1+\beta_2
    &\leq 
    y_1y_2^2 + y_1^2 y_2 + \frac{1}{2}(y_1+y_2+\abs{y_3}) \leq \frac{11}{3}y_1y_2^2
  \end{align*}
  because $y_1y_2\geq 3/4$. Therefore,
  \begin{displaymath}
    \lambda_1(\gamma_3\tau) \geq \dfrac{3\det \im(\tau)}{11 y_1y_2^2} \geq \frac{9}{44y_2}.
    \qedhere
  \end{displaymath}
\end{proof}

\section{Proof of the main theorem}
\label{sec:proof}

In this final section, we prove \cref{thm:main} by separating
different cases according to the value of~$n$. If~$n$ is large enough,
then \cref{lem:goodpos,lem:lambda1-gmai} are enough to conclude;
if~$n$ is smaller, then we apply the theta transformation formula
(\cref{prop:transf}) and the bounds on other theta constants given
in~§\ref{sec:bounds}.

In the proofs, we use numerical calculations, typically in order to
show that a given angle~$\alpha(q)$ is smaller than~$\pi/2$ for
certain values of~$q$. Such calculations are easily certified using
interval arithmetic, since the functions~$\alpha(q)$ we consider are
simple: they are either increasing or convex functions of~$q$.

In order to help the reader visualize the estimates, we created
pictures using GeoGebra \cite{hohenwarter_GeoGebra5882020}.

\begin{prop}
  \label{prop:gamma0}
  Let $\tau\in\Fund'$. Then for every $n\geq 0$, the theta constants
  $\theta_j(2^n\tau)$ for $0\leq j\leq 3$ are in good position.
\end{prop}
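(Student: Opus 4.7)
The plan is to reduce \cref{prop:gamma0} to \cref{lem:goodpos}(1) by verifying the uniform bound $r(2^n\tau) > 0.4$ for every $\tau\in\Fund'$ and every $n\geq 0$. Multiplying $\tau$ by $2^n$ multiplies every entry of $\im(\tau)$ by $2^n$, so the eigenvalues of $\im(\tau)$ and the quantities $y_1(\tau)/2$, $y_2(\tau)/2$ all scale linearly, giving $r(2^n\tau) = 2^n r(\tau)$. It therefore suffices to show $r(\tau) \geq \sqrt{3}/4$ for an arbitrary $\tau\in\Fund'$.

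The key step is the inequality $\lambda_1(\tau) \geq y_1(\tau)/2$. Using the explicit formula $\lambda_1(\tau) = \tfrac12\bigl(y_1+y_2-\sqrt{(y_1-y_2)^2+4y_3^2}\bigr)$, squaring both sides of $y_2 \geq \sqrt{(y_1-y_2)^2+4y_3^2}$ reduces this to the equivalent statement $y_1(2y_2-y_1) \geq 4y_3^2$. The defining conditions $y_1\leq y_2$ and $2\abs{y_3}\leq y_1$ of $\Fund'$ give $2y_2-y_1 \geq y_1$, so the left-hand side is at least $y_1^2$, while the right-hand side is at most $y_1^2$; the inequality is immediate.

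Combined with the trivial bound $y_2(\tau)/2 \geq y_1(\tau)/2$, this identifies $r(\tau) = y_1(\tau)/2$. The condition $y_1(\tau) \geq \sqrt{3}/2$ then yields $r(\tau) \geq \sqrt{3}/4 > 0.4$, and the same lower bound holds a fortiori for $r(2^n\tau)$. Hence \cref{lem:goodpos}(1) applies at each $2^n\tau$ and delivers the conclusion. There is no genuine obstacle here: this proposition is the easy base case $k=0$ of \cref{thm:main}, where no theta transformation formula is needed and the relevant $\im$-domain remains well-behaved under doubling. The real work will arise in analyzing $2^n\gamma_k\tau$ for $k\in\{1,2,3\}$, where for small $n$ the matrix need not be close enough to the cusp, and one must invoke \cref{cor:theta-transf} together with the series estimates of \cref{sec:bounds}.
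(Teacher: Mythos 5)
Your proposal is correct and follows exactly the paper's argument: the paper's proof is the one-line observation that $r(2^n\tau) = 2^n r(\tau) \geq \sqrt{3}/4 \geq 0.4$ followed by an appeal to \cref{lem:goodpos}. You merely supply the (correct) verification that $\lambda_1(\tau)\geq y_1(\tau)/2$ on $\Fund'$, which the paper leaves implicit.
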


\begin{proof}
  For every $n\geq 0$, we have
  \begin{displaymath}
    r(2^n\tau) = 2^n r(\tau) \geq \sqrt{3}/4 \geq 0.4,
  \end{displaymath}
  so the result follows from \cref{lem:goodpos}.
\end{proof}

\begin{lem} \label{lem:goodpos-46} Let $\tau\in\Fund'$.
  \begin{enumerate}
  \item For every $n\geq 0$ such that $2^n\leq 8.77 y_1(\tau)$, the
    theta constants $\theta_j(\tau_1^{(n)})$ for $j\in\{0,2,4,6\}$ are
    in good position.
  \item For every $n\geq 0$ such that $2^n\leq 8.77 y_2(\tau)$, the
    theta constants $\theta_j(\tau_2^{(n)})$ for $j\in\{0,1,8,9\}$ are
    in good position.
  \end{enumerate}
\end{lem}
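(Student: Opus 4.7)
It suffices to prove~(1); part~(2) will follow by the same argument after swapping the roles of $(y_1,z_1)$ and $(y_2,z_2)$ and using \cref{lem:bound89} in place of \cref{lem:bound46}. Setting $\tau':=\tau_1^{(n)}$, \cref{eq:taujn} yields $y_1(\tau')=2^{-n}y_1$, $y_2(\tau')=2^n y_2$, and $y_3(\tau')=y_3$, and the inequalities $2|y_3|\leq y_1\leq y_2$ that characterise $\Fund'$ give at once both $y_3(\tau')^2\leq \tfrac14\,y_1(\tau')y_2(\tau')$ and $2^{n+1}|y_3(\tau')|\leq y_2(\tau')$. Consequently \cref{lem:bound46} (applied with $k=2^{n+1}$) controls $\theta_4(\tau'),\theta_6(\tau')$ by the centre $\xi_{4,6}(\tau')=2\exp(i\pi\cdot 2^{-n}z_1/4)$, and \cref{lem:bound012} controls $\theta_0(\tau'),\theta_2(\tau')$ by $\xi_{0,2}(\tau')=1+2\exp(i\pi\cdot 2^{-n}z_1)$.

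The plan is then to exploit the fact that both centres lie close to the positive real axis, and show that the disks of uncertainty around them fit together inside a common open quarter plane. The centre arguments are bounded directly: $\arg\xi_{4,6}(\tau')=\pi\,2^{-n}x_1/4$ has absolute value at most $\pi/2^{n+3}$, and since $|2^{-n}x_1|\leq \tfrac12$ forces $\cos(\pi\cdot 2^{-n}x_1)\geq 0$, one has $\re\xi_{0,2}(\tau')\geq 1$, with $\arg\xi_{0,2}(\tau')$ sharing the sign of $x_1$ and admitting a similar explicit bound. The hypothesis $2^n\leq 8.77\,y_1$ is tailored to produce the uniform estimate $q_1(\tau')\leq\exp(-\pi/8.77)$, while $q_2(\tau')=q_2(\tau)^{2^n}$ is tiny for every allowed $n$ because $y_2\geq y_1\geq \sqrt{3}/2$; feeding these bounds into $\rho_{4,6}^{(2^{n+1})}$ and $\rho_0+2q_2$ gives explicit numerical control over the two disk radii. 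Using the implication $|\theta/\xi-1|\leq r\Rightarrow|\arg(\theta/\xi)|\leq\arcsin(r)$, the claim reduces to checking that the centre-argument spread $|\arg\xi_{0,2}(\tau')-\arg\xi_{4,6}(\tau')|$ plus the two resulting $\arcsin$-half-widths stays strictly below $\pi/2$.

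The hard part will be this last numerical verification: the threshold $8.77$ is essentially the largest for which the method works, so the angular budget is genuinely tight, consistent with the remark at the start of \S\ref{sec:proof} that the accompanying GeoGebra pictures display angles between $89$ and $90$ degrees. I expect the verification to reduce to showing that a handful of monotone or convex functions of $q_1(\tau')$ (with $q_2(\tau')$ a negligible perturbation) remain below an explicit constant on the compact range $q_1(\tau')\in[0,\exp(-\pi/8.77)]$, which can be certified by interval arithmetic along the lines indicated at the start of \S\ref{sec:proof}.
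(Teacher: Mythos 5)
Your plan is essentially the paper's proof: compare $\theta_4,\theta_6$ to the centre $\xi_{4,6}(\tau_1^{(n)})$ via \cref{lem:bound46} and $\theta_0,\theta_2$ to $\xi_{0,2}(\tau_1^{(n)})$ via \cref{lem:bound012}, note that $\re\xi_{0,2}\geq 1$ and that both centre arguments share the sign of $x_1$, then check that the centre-argument spread plus the two $\arcsin$ radii stays below $\pi/2$. Your verification of the hypotheses of \cref{lem:bound46} with $k=2^{n+1}$ is correct, and the threshold $2^n\leq 8.77\,y_1$ indeed translates into $q_1(\tau_1^{(n)})\leq 0.699$ exactly as you say. (The paper dispatches $n=0$ separately via Streng's explicit bounds rather than via the $\xi$'s, but your route also works there since $q_1(\tau)\leq e^{-\pi\sqrt3/2}$.)

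The one point you must make explicit is that the final numerical check \emph{cannot} be done by taking the worst case of each parameter independently over the single range $q_1(\tau_1^{(n)})\in[0,0.699]$. At $q_1=0.699$ the term $q_1^2/(1-q_1^4)$ alone contributes about $0.64$ to $\rho_{4,6}$ and $2q_1^4/(1-q_1^5)$ about $0.57$ to $\rho_0$, giving roughly $40^\circ+36^\circ$ of arc; adding the worst-case centre spread $\pi/8$ (attained only at $n=0$) overshoots $\pi/2$ by some $9^\circ$. The argument only closes because large $q_1(\tau_1^{(n)})$ forces $n$ to be large (since $y_1\geq\sqrt3/2$, one has $q_1(\tau_1^{(n)})>e^{-\pi\sqrt3/4}$ only if $n\geq 2$), which in turn shrinks $\abs{x_1(\tau_1^{(n)})}$ to at most $2^{-n-1}$, drives $q_2(\tau_1^{(n)})$ below $e^{-2\pi\sqrt3}$, and allows a larger $k$ in $\rho_{4,6}^{(k)}$. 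This is why the paper splits into the cases $n=0$, $n=1$ and $n\geq 2$ and uses different centre-argument bounds and different exponents $k$ in each; the resulting worst-case angle is just under $90^\circ$, so this coupling between $q_1$, $x_1(\tau_1^{(n)})$ and $q_2(\tau_1^{(n)})$ is not a refinement but the substance of the proof. With that case division added, your argument matches the paper's.
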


\begin{proof}
  We only prove the first statement, the second one being
  symmetric. We separate three cases: $n=0$, $n=1$, and $n\geq 2$.

  \paragraph{Case 1:} $n=0$. Then $\tau_1^{(n)} = \tau$. By
  \cite[Prop.~7.7]{streng_ComplexMultiplicationAbelian2010}, we have
  \begin{align*}
    \abs{\theta_j(\tau) - 1} \leq 0.405 &\qquad \text{for } j\in\{0,1,2,3\}, \text{ and} \\
    \abs{\frac{\theta_j(\tau)}{\xi_{4,6}(\tau)}-1} \leq 0.348 &\qquad \text{for } j\in\{4,6\}.
  \end{align*}
  The absolure value of the argument of $\xi_{4,6}(\tau)$ is at
  most~$\pi/8$. Therefore the angle between any two $\theta_j(\tau)$
  for $j\in\{0,1,2,3,4,6\}$ is at most
  \begin{displaymath}
    \frac{\pi}{8} + \arcsin(0.348) + \arcsin(0.405) < \frac\pi2.
  \end{displaymath}

  \paragraph{Case 2:} $n=1$.  We study the relative positions of
  $\xi_{0,2}$ and $\xi_{4,6}$ at~$\tau_1^{(1)}$.  Since
  $\abs{2^{-n}x_1(\tau)}\leq 1/4$, the absolute value of the argument
  of $\xi_{4,6}(\tau_1^{(1)})$ is bounded above by~$\pi/16$. Moreover,
  \begin{displaymath}
    \babs{\xi_{0,2}(\tau_1^{(1)})}\geq 1, \quad
    \babs{\arg(\xi_{0,2}(\tau_1^{(1)}))}
    \leq \arctan\Bigl(\frac{2q_1 \sin(\pi/4)}{1+2q_1\cos(\pi/4)}\Bigr),
  \end{displaymath}
  and the arguments of $\xi_{0,2}$ and $\xi_{4,6}$ have the same
  sign. Therefore the angle between any two $\theta_{j}(\tau_1^{(1)})$
  for $j\in \{0,2,4,6\}$ is at most
  \begin{displaymath}
    \max\Bigl\{\frac\pi{16},\arctan\Bigl(\frac{2q_1 \sin(\pi/4)}{1+2q_1\cos(\pi/4)}\Bigr)\Bigr\}
    + \arcsin\rho_{4,6}^{(4)}(q_1,q_2) + \arcsin (\rho_0(q_1,q_2) + 2q_2)
  \end{displaymath}
  by \cref{lem:bound46,lem:bound012}.  This quantity is less than
  $\pi/2$ because
  \begin{displaymath}
    q_2(\tau_1^{(1)})\leq \exp(-\pi\sqrt{3}) \quad\text{and}\quad
    q_1(\tau_1^{(1)})\leq \exp(-\pi\sqrt{3}/8).
  \end{displaymath}

  \paragraph{Case 3:} $n\geq 2$. We proceed as in Case 2, but we now
  have
  \begin{displaymath}
    q_2(\tau_1^{(n)})\leq \exp(-2\pi\sqrt{3}),\quad
    8\babs{y_3(\tau_1^{(n)})}\leq y_2(\tau_1^{(n)}), \quad\text{and}\quad
    \babs{x_1(\tau_1^{(n)})} \leq \frac18.
  \end{displaymath}
  Therefore the angle between the
  $\theta_j(\tau_1^{(n)})$ for $j\in\{0,2,4,6\}$ is bounded by
  \begin{equation}
    \label{eq:angle}
    \begin{aligned}
      &\max\Bigl\{\frac\pi{32},
      \arctan\Bigl(\frac{2 q_1 \sin(\pi/8)}{1+2 q_1 \cos(\pi/8)}\Bigr)\Bigr\}\\
      &+ \arcsin (\rho_{0}(q_1,\exp(-2\pi\sqrt{3})) + 2\exp(-2\pi\sqrt{3}))\\
      &+ \arcsin \rho_{4,6}^{(8)}(q_1,\exp(-2\pi\sqrt{3})).
    \end{aligned}
  \end{equation}
  This angle remains less that $\pi/2$ when
  $q_1(\tau_1^{(n)})\leq 0.699$. This is the case when
  $2^{n}\geq 8.77y_1(\tau)$.
\end{proof}

The geometric situation in Case~3 of \cref{lem:goodpos-46} can be represented as follows.
\addpic{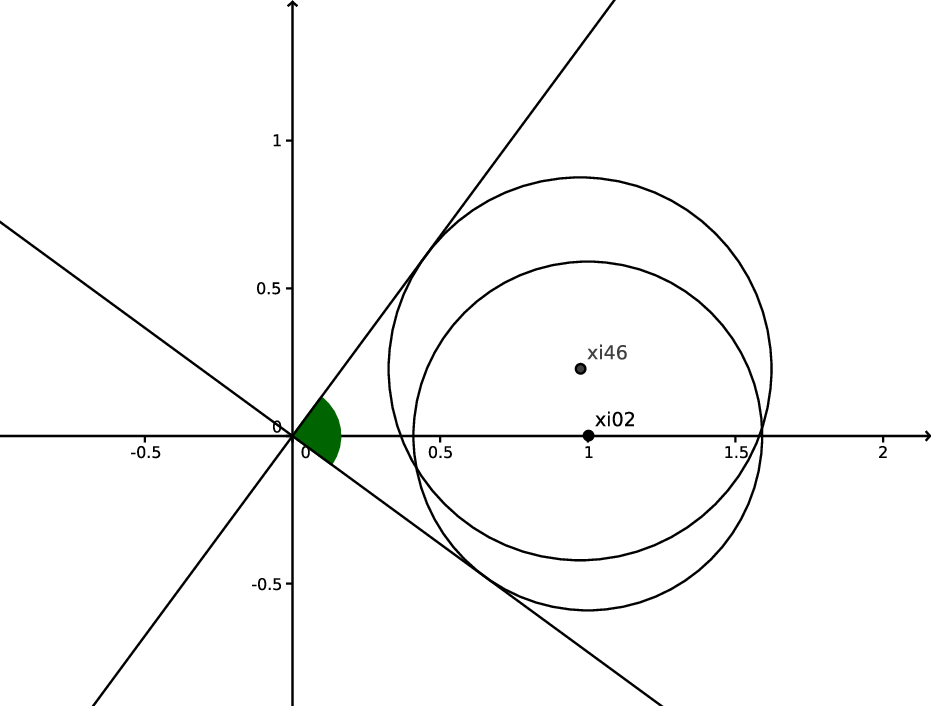}

In this picture, we take~$q_1=0.699$, and represent two complex
numbers~$\xi_{0,2}$ and~$\xi_{4,6}$ with modulus one, separated by
an angle of
\begin{displaymath}
  \max\Bigl\{\frac\pi{32},
  \arctan\Bigl(\frac{2 q_1 \sin(\pi/8)}{1+2 q_1 \cos(\pi/8)}\Bigr)\Bigr\} \simeq 0.22.
\end{displaymath}
Then we draw disks centered in~$\xi_{0,2}$ and~$\xi_{4,6}$ with
radii~$\rho_0(q_1,\exp(-2\pi\sqrt{3}))$
and~$\rho_{4,6}^{(8)}(q_1, \exp(-2\sqrt{3}))$ respectively. Finally
we represent the smallest angular sector seen from the origin
containing these two disks. The green angle is equal to the
quantity~\eqref{eq:angle}, and is indeed smaller than~$\pi/2$.

\begin{prop}
  \label{prop:gamma12}
  Let $\tau\in\Fund'$.
  \begin{enumerate}
  \item For every $n\geq 0$, the theta constants
    $(\theta_j(2^n\gamma_1\tau))_{0\leq j\leq 3}$ are in good
    position.
  \item For every $n\geq 0$, the theta constants
    $(\theta_j(2^n\gamma_2\tau))_{0\leq j\leq 3}$ are in good
    position.
  \end{enumerate}
\end{prop}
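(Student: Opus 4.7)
The plan is to split into two regimes according to the size of $2^n$ compared to $y_1(\tau)$ (resp. $y_2(\tau)$ for $\gamma_2$), exactly matching the threshold already appearing in \cref{lem:goodpos-46}. This way, the small-$n$ regime is handled by combining \cref{cor:theta-transf} with \cref{lem:goodpos-46}, while the large-$n$ regime falls into the scope of \cref{lem:goodpos} via the bound of \cref{lem:lambda1-gmai}.

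\textbf{Small-$n$ case.} Assume $2^n \leq 8.77\, y_1(\tau)$ for~(i); the case~(ii) is symmetric with $y_1$ replaced by $y_2$. By \cref{cor:theta-transf}, the tuple $(\theta_j(2^n\gamma_1\tau))_{0\leq j\leq 3}$ is, up to permutation and a common nonzero complex scalar (the factor $\kappa(\eta_1^{(n)})\zeta_8^{\eps}\det(C\tau+D)^{1/2}$), equal to $(\theta_j(\tau_1^{(n)}))_{j\in\{0,2,4,6\}}$ when $n\geq 1$, and to $(\theta_j(\tau))_{j\in\{0,2,4,6\}}$ when $n=0$. Since good position is invariant under common complex scaling, the statement reduces directly to \cref{lem:goodpos-46}(i).

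\textbf{Large-$n$ case.} Assume $2^n > 8.77\, y_1(\tau)$. By \cref{lem:lambda1-gmai},
\begin{displaymath}
  r(2^n\gamma_1\tau) = 2^n r(\gamma_1\tau) \geq 2^n \cdot \frac{9\,y_1(\tau)}{34\abs{z_1(\tau)}^2}
  > \frac{8.77 \cdot 9}{34} \cdot \frac{y_1(\tau)^2}{\abs{z_1(\tau)}^2}.
\end{displaymath}
Since $\abs{x_1(\tau)}\leq 1/2$ and $y_1(\tau)\geq \sqrt{3}/2$, one checks that $\abs{z_1(\tau)}^2 \leq y_1(\tau)^2 + 1/4 \leq \tfrac{4}{3}\,y_1(\tau)^2$, whence $y_1(\tau)^2/\abs{z_1(\tau)}^2 \geq 3/4$. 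Plugging in, $r(2^n\gamma_1\tau) > 1.7 \geq 0.4$, and \cref{lem:goodpos}(i) concludes.

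The argument for $\gamma_2$ is identical after swapping indices $1\leftrightarrow 2$, since \cref{lem:goodpos-46}(ii) and \cref{lem:lambda1-gmai} are stated symmetrically and the constraints $y_2 \geq \sqrt{3}/2$ and $\abs{x_2}\leq 1/2$ hold on $\Fund'$. No step is really an obstacle: all the hard work has been done in constructing the threshold~$8.77$ in \cref{lem:goodpos-46} so that it matches the numerics needed by \cref{lem:goodpos} when combined with \cref{lem:lambda1-gmai}; the only thing to verify carefully is the trivial-looking but crucial inequality $y_1^2/\abs{z_1}^2 \geq 3/4$, which is where the defining inequalities of~$\Fund'$ get used.
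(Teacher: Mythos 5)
Your proof is correct and follows essentially the same route as the paper: the small-$n$ regime ($2^n\leq 8.77\,y_1$) is reduced to \cref{lem:goodpos-46} via \cref{cor:theta-transf}, and the large-$n$ regime is handled by combining \cref{lem:lambda1-gmai} with \cref{lem:goodpos} and the bound $\abs{z_1}^2\leq y_1^2+\tfrac14\leq\tfrac43 y_1^2$ on $\Fund'$. The only cosmetic difference is that the paper observes the two regimes actually overlap (the large-$n$ argument already works for $2^n\geq 1.96\,y_1$), whereas you place the cut exactly at $8.77\,y_1$; both choices are valid.
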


\begin{proof}
  By \cref{lem:lambda1-gmai}, we have
  \begin{displaymath}
    r(\gamma_1\tau) \geq \frac{9\, y_1}{34\abs{z_1}^2} \geq
    \frac{9\, y_1}{34(1/4 + y_1^2)} \geq \frac{0.205}{y_1(\tau)}
  \end{displaymath}
  because $y_1(\tau)\geq \sqrt{3}/2$. By \cref{lem:goodpos}, the
  $\theta_j(2^n\gamma_1\tau)$ for $0\leq j\leq 3$ are in good position
  when $2^n r(\gamma_1\tau) \geq 0.4$. This is the case when
  $2^n \geq 1.96 y_1$. On the other hand, \cref{lem:goodpos-46}
  applies when $2^n\leq 8.77y_1$. The second statement is proved in
  the same way.
\end{proof}

\begin{lem}
  \label{lem:tau3n}
  Let $\tau\in\Fund'$. Then, for every $n\geq 0$ such that
  $2^n\leq 1.66 y_1$,
  the theta constants $\theta_j(\tau_3^{(n)})$ for $j\in\{0,4,8,12\}$
  are in good position.
\end{lem}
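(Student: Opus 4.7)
The plan is to follow the template of \cref{lem:goodpos-46}, now applied to the quadruple $\{0,4,8,12\}$. Write $\tau' = \tau_3^{(n)} = 2^{-n}\tau$. From $\tau \in \Fund'$ we inherit $\abs{x_j(\tau')} \leq 2^{-n-1}$ for $j \in \{1,2,3\}$, $y_1(\tau') \leq y_2(\tau')$, $2\abs{y_3(\tau')} \leq y_1(\tau')$, and $y_3(\tau')^2 \leq \tfrac{1}{4}\, y_1(\tau')y_2(\tau')$. The hypothesis $2^n \leq 1.66\, y_1$ gives $y_1(\tau') \geq 1/1.66 > 0.6$, so the nomes $q_1(\tau'), q_2(\tau')$ are both bounded above by $\exp(-0.6\pi) < 0.152$.

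I would then apply the four estimates from \cref{sec:bounds}: \cref{lem:bound012} for $\theta_0$ near $\xi_0$; \cref{lem:bound46} with $k=2$ for $\theta_4$ near $\xi_{4,6}$; \cref{lem:bound89} with $k=2$ for $\theta_8$ near $\xi_{8,9}$; and \cref{lem:bound12} for $\theta_{12}$ near $\xi_{12}$. Under the bound on $q_1(\tau'), q_2(\tau')$, each error radius $\rho_\bullet$ only contributes a small correction $\arcsin(\rho_\bullet/\abs{\xi_\bullet(\tau')})$ to the angle between $\theta_j(\tau')$ and its model. The arguments of the models themselves are bounded by $\abs{\arg \xi_{4,6}(\tau')} = \pi\abs{x_1(\tau)}\,2^{-n}/4 \leq \pi\, 2^{-n-3}$ and similarly for $\xi_{8,9}$; by $\abs{\arg \xi_0(\tau')} \leq \arctan\bigl(2(q_1(\tau')+q_2(\tau'))/(1 - 2q_1(\tau') - 2q_2(\tau'))\bigr)$; and, from the factorization $\xi_{12}(\tau') = \exp(i\pi(z_1(\tau')+z_2(\tau'))/4) \cdot 2\cos(\pi z_3(\tau')/2)$, by $\abs{\arg \xi_{12}(\tau')} \leq \pi\abs{x_1(\tau)+x_2(\tau)}\,2^{-n}/4 + \arctan\bigl(\tanh(\pi y_3(\tau')/2)\bigr)$. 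Splitting into cases $n=0$, $n=1$, and $n \geq 2$ as in \cref{lem:goodpos-46}, I would show in each that the sum of the relevant argument bounds and arcsine corrections stays strictly below $\pi/2$; this reduces to concrete numerical inequalities in $q_1(\tau'), q_2(\tau'), y_3(\tau')$ that can be certified by interval arithmetic.

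The main obstacle is the case $n=0$ combined with $y_1(\tau)$ large: the bound on $\abs{\arg \xi_{12}(\tau)}$ then approaches $\pi/4 + \arctan(\tanh(\pi y_1/4)) \to \pi/2$ in the limit. The saving is that in this same limit $q_1, q_2 \to 0$, so $\arg \xi_0(\tau) \to 0$ and the four arcsine corrections all vanish, pinning the opposite end of the angular span near $0$. Careful bookkeeping of which sign configurations of $(x_1,x_2,x_3,y_3)$ are extremal for $\arg \xi_{12}$, together with the monotonicity of the bounding functions in $y_1$, reduces the final verification to a finite list of numerical inequalities, in the same spirit as the preceding lemmas of this section.
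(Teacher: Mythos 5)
Your plan coincides with the paper's proof in its overall architecture: the same four models $\xi_0,\xi_{4,6},\xi_{8,9},\xi_{12}$ with the error radii of \cref{lem:bound46,lem:bound89,lem:bound012,lem:bound12}, a case split on $n$, and a correct identification of the crux, namely the pair $(\theta_0,\theta_{12})$ at $n=0$ when $\abs{y_3}$ is as large as $y_1/2$. However, the way you propose to close that case does not work as stated. Your bound $\abs{\arg\xi_{12}}\leq \pi\abs{x_1+x_2}/4+\arctan(\tanh(\pi\abs{y_3}/2))$ equals $\pi\abs{x_1+x_2}/4+\pi/4-\arctan(q_3)$, so the margin below $\pi/2$ it leaves is only of order $\arctan(q_3)\geq\arctan(q_1^{1/2})$ (using $2\abs{y_3}\leq y_1$). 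Against this margin you charge $\arcsin\rho_0+\arcsin\rho_{12}$, which is affordable since $\rho_0(q,q)\sim 2q\ll q^{1/2}$ --- but you also charge $\abs{\arg\xi_0}$, bounded by $\arctan\bigl(2(q_1+q_2)/(1-2q_1-2q_2)\bigr)\approx 4q$, as a pure loss. At the top of the admissible range, $q=\exp(-\pi\sqrt3/2)\approx 0.066$, one finds $\arctan\bigl(q^{1/2}/(1+q^{1/2})\bigr)\approx 0.20$ while $\arctan(4q/(1-4q))+\arcsin\rho_0(q,q)+\arcsin\rho_{12}(q,q)\approx 0.34+0.14+0.05\approx 0.53$; the configuration $x_1=x_2=\tfrac12$, $y_1=\sqrt3/2$, $y_3=y_1/2$ lies in $\Fund'$, and there your triangle inequality yields an angle of roughly $106^\circ$. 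The missing idea is the sign correlation between $\arg\xi_0$ and the factor $\exp(i\pi(x_1+x_2)/4)$ of $\xi_{12}$: both are governed by the signs of $x_1,x_2$. The paper splits on whether $x_1$ and $x_2$ have the same sign; if they do, $\arg\xi_0$ lies on the same side as $\pi(x_1+x_2)/4$ and does not enlarge the angle, and if not, $\pi\abs{x_1+x_2}/4\leq\pi/8$, which more than absorbs the $\approx 4q$ loss. Restricting the sign bookkeeping to $\xi_{12}$ alone, as you do, is not enough.

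A secondary point: for the final numerical check you invoke monotonicity in $y_1$, but the critical bound $q\mapsto \tfrac\pi2-\arctan\bigl(q^{1/2}/(1+q^{1/2})\bigr)+\arcsin\rho_{12}(q,q)+\arcsin\rho_0(q,q)$ is not monotonic on the relevant range (it tends to $\pi/2$ as $q\to 0^+$ and is not increasing), so neither endpoint alone certifies it. The paper instead observes that the function is convex, hence bounded on $(0,q_{\max}]$ by the maximum of its limit $\pi/2$ at $0$ and its (numerically verified, strictly smaller) value at $q_{\max}$, which gives the strict inequality for every actual $q>0$.
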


\begin{proof}
  Write $q = q_1(\tau_3^{(n)})$ for short.  We separate two cases:
  $n\geq 1$, and $n=0$.

  \paragraph{Case 1:} $n\geq 1$. In this case, we have
  \begin{displaymath}
    \babs{x_j(\tau_3^{(n)})}\leq 1/4 \quad \text{for each } 1\leq j\leq 3.
  \end{displaymath}
  Therefore, given the expressions
  of~$\xi_0$,~$\xi_{4,6}$,~$\xi_{8,9}$ and~$\xi_{12}$
  (see~\eqref{eq:xi}), and by
  \cref{lem:bound46,lem:bound89,lem:bound012,lem:bound12},
  \begin{itemize}
  \item The angle between $\theta_4(\tau_3^{(n)})$ and
    $\theta_8(\tau_3^{(n)})$ is bounded by
    \begin{displaymath}
      \frac{\pi}{8} + 2\arcsin\rho_{4,6}^{(2)}(q, q).
    \end{displaymath}
  \item The angle between $\theta_4(\tau_3^{(n)})$ (or $\theta_8$) and
    $\theta_0(\tau_3^{(n)})$ is bounded by
    \begin{displaymath}
      \frac\pi{16} + \arcsin\rho_{4,6}^{(2)}(q,q) +  2q\sin(\pi/4) + \arcsin \rho_0(q, q).
    \end{displaymath}
  \item The angle between $\theta_{12}(\tau_3^{(n)})$ and $\theta_4(\tau_3^{(n)})$ (or
    $\theta_8$) is bounded by
    \begin{displaymath}
      \frac{3\pi}{16} + \arcsin \rho_{12}(q,q) + \arcsin \rho_{4,6}^{(2)}(q,q).
    \end{displaymath}
  \item The angle between $\theta_{12}(\tau_3^{(n)})$ and $\theta_0(\tau_3^{(n)})$ is bounded by
    \begin{displaymath}
      \frac{\pi}{4} + \arcsin\rho_{12}(q,q) + \arcsin \rho_0(q,q).
    \end{displaymath}
  \end{itemize}
  All these quantities remain less than $\pi/2$ when $q\leq
  0.151$. This is the case when $2^n\leq 1.66y_1$.
  

  \paragraph{Case 2:} $n=0$. In this case, we have
  $q\leq \exp(-\pi\sqrt{3}/2)$. Therefore,
  \begin{itemize}
  \item The angle between $\theta_4$ and $\theta_8$ is bounded by
    \begin{displaymath}
      \frac\pi4 + 2\arcsin\rho_{4,6}^{(2)}(q,q) < \frac\pi 2.
    \end{displaymath}
  \item The angle between $\theta_4$ (or $\theta_8$) and $\theta_0$ is
    bounded by
    \begin{displaymath}
      \frac\pi 8 + \arcsin \rho_{4,6}^{(2)}(q,q) + \arcsin(\rho_0(q,q)+4q) < \frac\pi2.
    \end{displaymath}
  \item The angle between $\theta_{12}$ and $\theta_4$ (or $\theta_8$)
    is bounded by
    \begin{displaymath}
      \frac{3\pi}{8} + \arcsin\rho_{12}(q,q) + \arcsin \rho_{4,6}^{(2)}(q,q) < \frac\pi2.
    \end{displaymath}
  \end{itemize}
  These estimations can be represented as follows, with similar
  conventions as in the picture after \cref{lem:goodpos-46}:
  \addpic{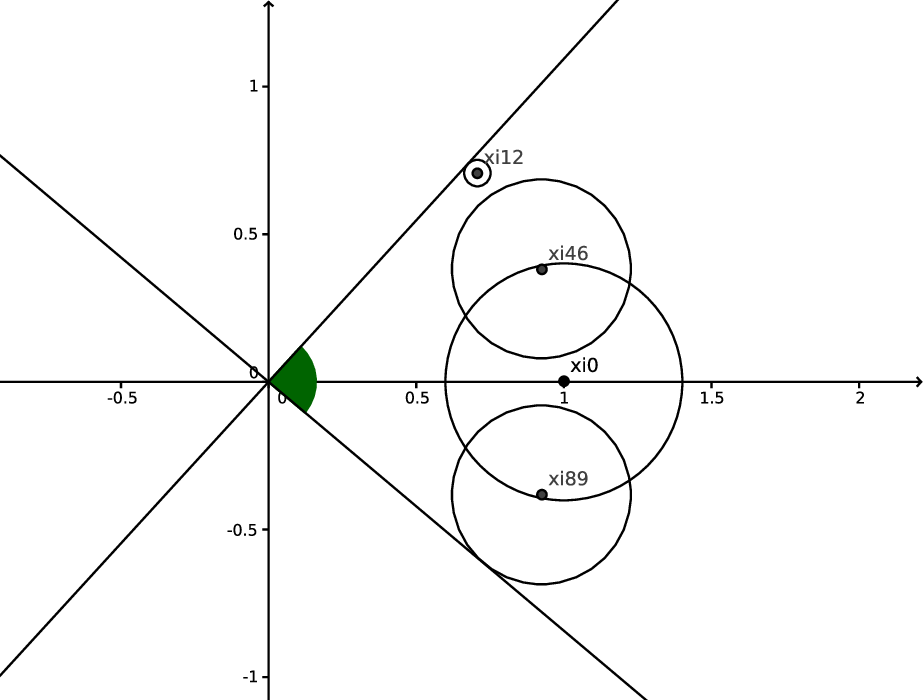}

  We finally study the angle between $\theta_{12}$ and $\theta_0$.
  The argument of $\xi_{12}(\tau)$ is $x_1/4+x_2/4+\beta$ with
  $\beta = \arg(\exp(i\pi z_3/2) + \exp(-i\pi z_3/2))$. Up to
  conjugating, we may assume that $y_3\geq 0$ and $x_3\geq 0$. Then
  \begin{align*}
    \exp(i\pi z_3/2) + \exp(-i\pi z_3/2) = \exp(-i\pi z_3/2)(1 + \exp(i\pi z_3))
  \end{align*}
  so
  \begin{align*}
    \beta + \frac{\pi x_3}{2} \geq 
    \arctan\left(\frac{q_3\sin(\pi x_3)}{1+ q_3}\right)
    \geq \arctan\left(\frac{2x_3q_3}{1+q_3}\right).
  \end{align*}
  In general, we have
  \begin{displaymath}
    \abs{\beta} 
    \leq \frac{\pi}{4} - \arctan\left(\frac{q^{1/2}}{1+q^{1/2}}\right).
  \end{displaymath}
  On the other hand,
  \begin{displaymath}
    1\leq \re(\xi_0(\tau))\leq 1+4q,\quad \im(\xi_0(\tau)) = 2q_1\sin(\pi x_1) + 2 q_2\sin(\pi x_2).
  \end{displaymath}
  We discuss two cases according to the signs of $x_1$ and $x_2$:
  \begin{itemize}
  \item If $x_1$ and $x_2$ have opposite signs, then the angle between
    $\theta_{12}$ and $\theta_0$ is at most
    \begin{displaymath}
      \frac{3\pi}{8}
      + \arctan(2q) + \arcsin \rho_{12}(q,q) + \arcsin\rho_0(q,q).
    \end{displaymath}
  \item If $x_1$ and $x_2$ have the same sign, say positive, then
    \begin{displaymath}
      \frac{x_1+x_2}{4} - \arg\xi_0(\tau) \leq \frac{x_1+x_2}{4}.
    \end{displaymath}
    Therefore the angle between $\theta_{12}$ and $\theta_0$ is at most
    \begin{displaymath}
      \frac\pi2 -  \arctan\left(\frac{q^{1/2}}{1+q^{1/2}}\right) + \arcsin\rho_{12}(q,q) + \arcsin\rho_{0}(q,q).
    \end{displaymath}
    This function of $q$ is not increasing, but it is convex.
  \end{itemize}
  A numerical investigation shows that both quantities remain less
  than $\pi/2$ when $q\leq \exp(\sqrt{3}/2)$. 
\end{proof}

\begin{lem}
  \label{lem:tau4n}
  Let $\tau\in\Fund'$, and let $n_0\in\N$ such that
  $2^{n_0} > 1.66 y_1$. Then, for every $n\geq n_0$ such that
  $2^n\leq 4.2y_2(\tau)$, the theta constants $\theta_j(\tau_4^{(n)})$
  for $j\in\{0,1,8,9\}$ are in good position.
\end{lem}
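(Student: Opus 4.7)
The plan is to follow the template of \cref{lem:tau3n}, comparing each $\theta_j(\tau_4^{(n)})$ for $j\in\{0,1,8,9\}$ to one of the principal terms $\xi_{0,1}(\tau_4^{(n)})$ or $\xi_{8,9}(\tau_4^{(n)})$ and controlling the remainder via \cref{lem:bound0prime}. By \cref{lem:tau4}, $\tau_4^{(n)}$ satisfies $y_3^2\leq (3/7)y_1 y_2$ and $|y_3|\leq (3/2^{n+2})y_1$; since the hypothesis $2^{n_0}>1.66\,y_1$ together with $y_1\geq\sqrt 3/2$ forces $n_0\geq 1$, the choice $k=2^{n+2}/3\geq 8/3\geq 2$ is legal in \cref{lem:bound0prime}. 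This yields absolute bounds $|\theta_j-\xi_{0,1}|\leq \rho_{0,1}'$ for $j\in\{0,1\}$ and relative bounds $|\theta_j/\xi_{8,9}-1|\leq \rho_{8,9}'$ for $j\in\{8,9\}$, all evaluated at $\tau_4^{(n)}$.

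Next I would control the arguments of the principal terms. Writing $x_2$ for $x_2(\tau_4^{(n)})$ and $q_j$ for $q_j(\tau_4^{(n)})$, the bound $|x_2|\leq 9/2^{n+3}$ from \cref{lem:tau4} gives $|\arg\xi_{8,9}|\leq 9\pi/2^{n+5}$ and $|\arg\xi_{0,1}|\leq \arctan\!\bigl(2q_2\sin(\pi|x_2|)/(1+2q_2\cos(\pi x_2))\bigr)$, while $|\xi_{0,1}|\geq 1$ since $|x_2|<1/2$ for $n\geq 1$. The triangle inequality then bounds the angle between any two of the $\theta_j(\tau_4^{(n)})$ for $j\in\{0,1,8,9\}$ by an appropriate combination of $|\arg\xi_{0,1}|$, $|\arg\xi_{8,9}|$, $\arcsin\rho_{0,1}'$, and $\arcsin\rho_{8,9}'$; the tightest case is the cross pair $\{\theta_0,\theta_8\}$ (or $\{\theta_1,\theta_9\}$), in which all four terms contribute.

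To conclude, I would extract explicit upper bounds on $q_1$ and $q_2$ and verify the resulting inequality numerically. The identity $y_1(\tau_4^{(n)})=2^n y_1/|z_1|^2$ with $|z_1|^2\leq y_1^2+1/4$ and $2^n>1.66\,y_1$ yields a lower bound on $y_1(\tau_4^{(n)})$, hence an upper bound on $q_1$; similarly, $y_2(\tau_4^{(n)})=2^{-n}(y_2-\im(z_3^2/z_1))$ together with the $\Fund'$ estimates and $2^n\leq 4.2\,y_2$ yields a lower bound on $y_2(\tau_4^{(n)})$ bounded away from~$0$, hence an upper bound on $q_2$. The claim then reduces to a numerical inequality in $(q_1,q_2)$, checkable by interval arithmetic thanks to the monotonicity or convexity of the bounding functions in each variable, as for the preceding lemmas in this section.

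The main obstacle is the lower bound on $y_2(\tau_4^{(n)})$: the cross-term $\im(z_3^2/z_1)$ can be negative, so one must verify carefully that it cannot cancel too much of $y_2$ over $\Fund'$. This tightest case, at the boundary $2^n\approx 4.2\,y_2$, is exactly what pins down the constant $4.2$ in the hypothesis, and is where the combined angle is closest to $\pi/2$.
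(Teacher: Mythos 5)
Your overall strategy is the paper's: use \cref{lem:tau4} to legitimize \cref{lem:bound0prime}, compare $\theta_0,\theta_1$ to $\xi_{0,1}(\tau_4^{(n)})$ and $\theta_8,\theta_9$ to $\xi_{8,9}(\tau_4^{(n)})$, control the arguments of the principal terms via $\abs{x_2(\tau_4^{(n)})}\leq 9/2^{n+3}$, and get the lower bound on $y_2(\tau_4^{(n)})$ from $\det\im(\tau_4^{(n)})=\det\im(\tau)/\abs{z_1}^2$ rather than from the raw expression $2^{-n}(z_2-z_3^2/z_1)$ (this disposes of the ``main obstacle'' you flag: $y_2(\tau_4^{(n)})\geq \frac{3}{2^{n+2}}y_2(\tau)$ follows at once).

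There is, however, a genuine gap in your final step. You propose to freeze all parameters at their worst values over $n\geq n_0$ --- $k=8/3$, $\abs{x_2}\leq 9/16$, $q_1\leq 0.021$ --- and then check a single numerical inequality in $q_2$ up to the value corresponding to $2^n=4.2\,y_2(\tau)$. That check fails: with exactly these stage-one constants the combined angle stays below $\pi/2$ only for $q_2(\tau_4^{(n)})\leq 0.38$, i.e.\ only for $2^n\leq 2.43\,y_2(\tau)$. The paper closes the remaining range by a second regime: when $2^n>2.43\,y_2(\tau)$ one necessarily has $n\geq 2$, which improves every constant simultaneously ($k=16/3$, $\abs{x_2}\leq 9/32$, and $y_1(\tau_4^{(n)})>1.82$ so $q_1<0.0033$), and only then does the angle bound survive up to $q_2\leq 0.571$, i.e.\ $2^n\leq 4.2\,y_2(\tau)$. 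Your $n$-dependent choice $k=2^{n+2}/3$ shows you are aware the constants improve with $n$, but without an explicit split (or some equivalent coupling of the growth of $q_2$ to the improvement of $k$, $q_1$ and $x_2$) the ``single numerical inequality in $(q_1,q_2)$'' you describe does not reach the stated constant $4.2$. A smaller slip: you assert $\abs{\xi_{0,1}(\tau_4^{(n)})}\geq 1$ because ``$\abs{x_2}<1/2$ for $n\geq 1$'', but at $n=1$ the bound is $9/16>1/2$, so $\cos(\pi x_2)$ can be negative and $\abs{\xi_{0,1}}$ can dip below $1$; the paper uses the weaker but correct bound $\abs{\xi_{0,1}(\tau_4^{(n)})}\geq\cos(\pi/16)$ and divides $\rho_{0,1}'$ by this constant before taking the $\arcsin$.
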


\begin{proof}
  By assumption, we have
  $y_1(\tau_4^{(n)}) \geq \frac{3}{4}\cdot 1.66 \geq 1.24$, so
  $q_1(\tau_4^{(n)})\leq 0.021$. Moreover we must have $n\geq 1$, so
  by \cref{lem:tau4}, $\babs{x_2(\tau_4^{(n)})}\leq 9/16$, and
  \begin{displaymath}
    \babs{y_3(\tau_4^{(n)})} \leq \frac38 y_1(\tau_4^{(n)}).
  \end{displaymath}
  Therefore, we can apply \cref{lem:bound0prime} with $k=8/3$: we have
  \begin{align*}
    \babs{\theta_j(\tau_4^{(n)}) - \xi_{0,1}(\tau_4^{(n)})}
    &\leq \rho_{0,1}'^{(8/3)}(0.021, q_2(\tau_4^{n}))
      \quad \text{for } j\in\{0,1\},\\
    \abs{\frac{\theta_j(\tau_4^{(n)})}{\xi_{8,9}(\tau_4^{(n)})} -1}
    &\leq \rho_{8,9}'^{(8/3)}(0.021, q_2(\tau_4^{(n)})
      \quad \text{for }j\in\{8,9\}.
  \end{align*}
  Let us investigate the difference between the arguments
  of~$\xi_{8,9}(\tau_4^{(n)})$ and~$\xi_{0,1}(\tau_4^{(n)})$. Both
  have the sign of~$x_2(\tau_4^{(n)})$, which we may assume to be
  positive. If the argument of~$\xi_{8,9}$ is the largest, then the
  difference is bounded by
  \begin{displaymath}
    \arg \xi_{8,9}(\tau_4^{(n)}) \leq \frac{9\pi}{64}.
  \end{displaymath}
  If the argument of $\xi_{0,1}$ is the largest, we distinguish two
  cases. If $x_2(\tau_4^{(n)})\geq \frac{3\pi}{8}$, then
  \begin{displaymath}
    \arg\xi_{0,1}(\tau_4^{(n)}) - \arg \xi_{8,9}(\tau_4^{(n)}) \leq 
    \arctan\Bigl(\frac{2q_2}{1 + 2q_2\cos(9\pi/16)}\Bigr) - \frac{3\pi}{32}.
  \end{displaymath}
  On the other hand, if $x_2(\tau)\leq 3\pi/8$, then
  \begin{displaymath}
    \arg\xi_{0,1}(\tau_4^{(n)}) - \arg \xi_{8,9}(\tau_4^{(n)})\leq \arg \xi_{0,1}(\tau_4^{(n)})
    \leq \arctan\Bigl(\frac{2q_2\sin(3\pi/8)}{1 + 2q_2\cos(3\pi/8)}\Bigr)
  \end{displaymath}
  Note that $\babs{\xi_{0,1}(\tau_4^{(n)})}$ is always greater than
  $\cos(\pi/16)$. Therefore the angle between the
  $\theta_j(\tau_4^{(n)})$ for $j\in\{0,1,8,9\}$ is at most
  \begin{align*}
    &\max\left\{\frac{9\pi}{64},\
      \arctan\Bigl(\frac{2q_2}{1 + 2q_2\cos(9\pi/16)}\Bigr) - \frac{3\pi}{32},\
      \arctan\Bigl(\frac{2q_2\sin(3\pi/8)}{1 + 2q_2\cos(3\pi/8)}\Bigr)\right\}\\
    &\qquad+ \arcsin \rho_{8,9}'^{(8/3)}(0.021,q_2)
      + \arcsin \frac{\rho_{0,1}'^{(8/3)}(0.021,q_2)}{\cos(\pi/16)}.
  \end{align*}
  This quantity is less than $\pi/2$ when
  $q_2(\tau_4^{(n)})\leq 0.38$. Since
  $y_2(\tau_4^{(n)})\geq \frac{3}{2^{n+2}} y_2(\tau)$ by
  \cref{lem:tau4}, this is the case when $2^n \leq 2.43
  y_2(\tau)$. 

  On the other hand, if $2^n > 2.43 y_2(\tau)$, then we must
  have $n\geq 2$. Moreover,
  \begin{displaymath}
    y_1(\tau_4^{(n)}) > 2.43 \frac{y_1(\tau) y_2(\tau)}{\abs{z_1(\tau)^2}} > 1.82,
  \end{displaymath}
  so $q_1(\tau_4^{(n)}) < 0.0033$. Then, the angle bound improves to
  \begin{align*}
    &\max\left\{\frac{9\pi}{128},
    \arctan\Bigl(\frac{2q_2 \sin(9\pi/32)}{1 + 2q_2\cos(9\pi/32)}\Bigr)\right\}\\
    &\qquad
    + \arcsin \rho_{8,9}'^{(16/3)}(0.0033,q_2) + \arcsin \rho_{0,1}'^{(16/3)}(0.0033,q_2).
  \end{align*}
  This quantity is less than $\pi/2$ when
  $q_2(\tau_4^{(n)})\leq 0.571$, and the latter inequality holds when
  $2^n\leq 4.2 y_2(\tau)$.
\end{proof}

\begin{prop}
  \label{prop:gamma3}
  Let $\tau\in\Fund'$. Then, for
  every $n\geq 0$, the theta constants $\theta_j(2^n\gamma_3\tau)$ for
  $0\leq j\leq 3$ are in good position.
\end{prop}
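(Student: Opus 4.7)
The plan is to combine the two projective identities for $(\theta_j(2^n\gamma_3\tau))_{0\leq j\leq 3}$ provided by \cref{cor:theta-transf} with a direct estimate near the cusp, by splitting $n\geq 0$ into three regimes according to the size of $2^n$ relative to $y_1(\tau)$ and $y_2(\tau)$.

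For small $n$, namely $2^n \leq 1.66\, y_1(\tau)$, I would use the projective equality $(\theta_j(2^n\gamma_3\tau))_{0\leq j\leq 3} = (\theta_0 : \theta_8 : \theta_4 : \theta_{12})(\tau_3^{(n)})$ and invoke \cref{lem:tau3n} verbatim. For intermediate $n$, namely $2^n > 1.66\, y_1(\tau)$ but $2^n \leq 4.2\, y_2(\tau)$, let $n_0$ be the smallest integer with $2^{n_0} > 1.66\, y_1(\tau)$; then $n \geq n_0$, and I would use the other projective equality $(\theta_j(2^n\gamma_3\tau))_{0\leq j\leq 3} = (\theta_0 : \theta_8 : \theta_1 : \theta_9)(\tau_4^{(n)})$ together with \cref{lem:tau4n}.

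For large $n$, namely $2^n > 4.2\, y_2(\tau)$, I would estimate $\lambda_1(2^n\gamma_3\tau)$ directly rather than going through $\tau_3^{(n)}$ or $\tau_4^{(n)}$. Since $\lambda_1$ scales linearly under multiplication of $\tau$ by a positive real, \cref{lem:lambda1-gmai} gives
\[
  \lambda_1(2^n\gamma_3\tau) \;=\; 2^n\lambda_1(\gamma_3\tau) \;\geq\; \frac{9\cdot 2^n}{44\, y_2(\tau)} \;>\; \frac{9\cdot 4.2}{44} \;>\; 0.6,
\]
so \cref{lem:goodpos}(2) yields the desired good position. These three regimes cover every $n\geq 0$: since $y_1(\tau)\leq y_2(\tau)$ on $\Fund'$, one has $1.66\, y_1 \leq 1.66\, y_2 \leq 4.2\, y_2$, so if the small regime fails then $n\geq n_0$ and either $2^n\leq 4.2\, y_2$ (intermediate regime) or $2^n > 4.2\, y_2$ (large regime).

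There is little in the way of a genuine obstacle: once the preceding lemmas are in place, the proof is essentially an assembly, and the only thing to verify is that the numerical thresholds $1.66\, y_1$ and $4.2\, y_2$ mesh so as to leave no $n$ uncovered, which is immediate from $y_1 \leq y_2$. The substantive content sits in \cref{lem:tau3n,lem:tau4n,lem:goodpos,lem:lambda1-gmai}.
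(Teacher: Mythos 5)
Your proposal is correct and follows essentially the same route as the paper: the large-$n$ regime is handled via \cref{lem:lambda1-gmai} and \cref{lem:goodpos}(2) using $\lambda_1(2^n\gamma_3\tau)=2^n\lambda_1(\gamma_3\tau)$, and the remaining $n$ are split exactly as you do between \cref{lem:tau3n} and \cref{lem:tau4n} via the two projective identities of \cref{cor:theta-transf}. The only cosmetic difference is that the paper places the cutoff for the eigenvalue argument at $2^n\geq 2.94\,y_2$ rather than $4.2\,y_2$, which only makes the overlap of the regimes more visible.
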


\begin{proof}
  By \cref{lem:lambda1-gmai}, we have
  \begin{displaymath}
    \lambda_1(\gamma_3\tau) \geq \frac{9}{44 y_2(\tau)}.
  \end{displaymath}
  Therefore, by \cref{lem:goodpos}, the theta constants are in good position as soon as
  \begin{displaymath}
    2^n \frac{9}{44 y_2(\tau)} \geq 0.6, \quad \text{or} \quad 2^n\geq 2.94 y_2(\tau).
  \end{displaymath}
  When $n$ is smaller, we use the transformation
  formulas. \Cref{lem:tau3n} applies when $2^n\leq 1.66 y_1(\tau)$,
  and \cref{lem:tau4n} applies when
  $1.66 y_1(\tau) < 2^n \leq 4.2 y_2(\tau)$.
\end{proof}

\Cref{prop:gamma0,prop:gamma12,prop:gamma3} together imply \cref{thm:main}.

\bibliographystyle{abbrv}
\bibliography{dupont.bib}

\bigskip

{\it  \noindent
Jean Kieffer \\
Institut de Mathématiques de Bordeaux \\ 
351 cours de la Libération, 33400 Talence, France
}
\end{document}